\newtheorem{theorem}{Theorem}[section]
\newtheorem{corollary}[theorem]{Corollary}
\newtheorem{definition}[theorem]{Definition}
\newtheorem{lemma}[theorem]{Lemma}
\newtheorem{remark}[theorem]{Remark}
\renewenvironment{proof}[1][Proof]{\noindent\textbf{#1.} }{\ \rule{0.5em}{0.5em}}
\begin{document}
\title[Metric operator and g.o. property for standard homogeneous Finsler metric]{Metric operator and geodesic orbit property for a standard homogeneous Finsler metric}
\author{Lei Zhang}
\address[Lei Zhang]
{School of Mathematical Sciences and Information Science, Yantai University, Yantai City 264005, P.R. China}
\email{546502871@qq.com}
\author{Ming Xu}
\address[Ming Xu] {Corresponding author, School of Mathematical Sciences,
Capital Normal University,
Beijing 100048,
P.R. China}
\email{mgmgmgxu@163.com}
\date{}
\maketitle
\begin{abstract}
In this paper, we introduce the metric operator for a compact homogeneous Finsler space, and use it to investigate the geodesic orbit property. We define the notion of standard homogeneous $(\alpha_1,\cdots,\alpha_s)$-metric which generalizes the notion of standard homogeneous $(\alpha_1,\alpha_2)$-metric. We classify all connected simply connected homogeneous manifold $G/H$ with a compact connected simple Lie group $G$ and two irreducible summands in its isotropy representation, such that there exists a standard homogeneous $(\alpha_1,\alpha_2)$-metric which is
g.o. but not naturally reductive on $G/H$. We also prove that on a generalized Wallach space which is
not a product of three symmetric spaces, any standard homogeneous $(\alpha_1,\alpha_2,\alpha_3)$-metric $F$ with respect to the canonical decomposition is g.o. on $G/H$ if and only if $F$ is a normal homogeneous Riemannian metric.

\medskip
\textbf{Mathematics Subject Classification (2014)}: 22E46, 53C30.

\medskip
\textbf{Key words}:compact homogeneous Finsler space, geodesic orbit Finsler space; metric operator ; standard homogeneous $(\alpha_1,\alpha_2,\cdots \alpha_s)$ metric; generalized Wallach space.
\end{abstract}
\section{Introduction}

A homogeneous Finsler space $(G/H,F)$ is called geodesic orbit (or g.o. in short), if any
geodesic (of positive constant speed) is the orbit of the one-parameter subgroup generated
by a Killing vector field $X\in \mathfrak{g} = Lie(G)$. This notion was first introduced in Riemannian geometry by
O. Kowalski and L. Vanhecke in 1991 \cite{KV}. There are many research works on this subject. See \cite{AW,BN2020,DA,DN,Gordon,GN} and the references therein for some recent progress. Meanwhile, the geodesic orbit property was studied in Finsler geometry \cite{YD}. It is well known that weakly symmetric Finsler spaces, which included all globally symmetric Finsler spaces, are g.o. \cite{D1}. Normal homogeneous Finsler spaces \cite{XD} and $\delta$-homogeneous Finsler spaces \cite{XZ} are also g.o., and they have many interesting curvature properties, for example, their flag curvatures are non-negative and their S-curvatures vanish.

In this paper, we only consider the homogeneous manifold $G/H$ with a compact connected semi simple $G$. Let $\mathfrak{g}$ and $\mathfrak{h}$ be the Lie algebras of $G$ and $H$ respectively, and we choose the $\mathrm{Ad}(G)$-invariant inner product $Q(\cdot,\cdot)=-B(\cdot,\cdot)$ on $\mathfrak{g}$, where $B(\cdot,\cdot)$ is the Killing form.
With respect to $Q$, we have an orthogonal reductive decomposition $\mathfrak{g}=\mathfrak{h}+\mathfrak{m}$ (here the reductiveness implies $[\mathfrak{h}, \mathfrak{m}]\subseteq \mathfrak{m}$). We can identify $\mathfrak{m}$ with the tangent space $T_o(G/H)$ at the origin $o=eH$, such that the isotropy representation coincides with the $\mathrm{Ad}(H)$-action on $\mathfrak{m}$.

Any homogeneous Riemannian metric $\mathrm{g}$ on $G/H$ is determined by some positive $Q$-symmetric $Ad(H)$-equivariant metric operator $A:\mathfrak{m}\rightarrow \mathfrak{m}$ by
the formula $\mathrm{g}_{eH}(X,Y)=Q(AX,Y)$, $\forall X,Y\in \mathfrak{m}$. Proposition 1 in \cite{DA} and Proposition 2 in \cite{S} show that $(G/H,\mathrm{g})$ is g.o. if and only if for any $X\in \mathfrak{m}$, there exists $Z\in \mathfrak{h}$ such that
$[X+Z,AX]=0$,
and $(G/H,\mathrm{g})$ is naturally reductive with respect to the given orthogonal reductive decomposition if and only if $
  [X,AX]=0$, $\forall X\in\mathfrak{m}$.

Now we generalize the notion of metric operator to homogeneous Finsler manifold.
There is a one-to-one correspondence between the $G$-invariant Finsler metric $F$ on $G/H$ and the $\mathrm{Ad}(H)$-invariant Minkowski norm, which is still denoted by $F$ for simplicity \cite{D1,DH1}. Let $g_y$ with $y\in \mathfrak{m}\backslash\{0\}$ be the fundamental tensor of the Minkowski norm $F$. Then we define the {\it metric operator} of $(G/H,F)$ as
\begin{equation*}
  A_y:\mathfrak{m}\rightarrow \mathfrak{m},\ g_y(u,v)=Q(A_y(u),v),\ \forall y\in \mathfrak{m}\backslash \{0\},u,v\in \mathfrak{m}.
\end{equation*}
Obviously, the metric operator $A_y$ is positive definite and $Q$-symmetric for each $y\in\mathfrak{m}\backslash\{0\}$. Generally speaking, it depends on $y$ and it is not $\mathrm{Ad}(H)$-invariant, but we  can still use it to describe the g.o. and naturally reductive properties (see Lemma \ref{lemma3.4} and Lemma \ref{lemma 3.6}).

In this paper, we use this Finsler metric operator to study the g.o. property.
Our first main theorem generalizes Theorem 2 in \cite{CN} to Finsler geometry.

\begin{theorem}\label{theorem 1.1}
 Let $G/H$ be a connected simply connected homogeneous manifold such that $G$ is a compact connected simple Lie group, and the isotropy representation is the sum of two irreducible summands. Suppose that $G/H$ admits a standard homogeneous $(\alpha_1,\alpha_2)$-metric which is g.o. but not naturally reductive. Then there exists a compact subgroup $K$ such that $H\subset K\subset G$, $\dim H<\dim K<\dim G$, $G/K$ is symmetric, and the triple $(H,K,G)$ coincides with one of the following in the Lie algebraic level:
\begin{enumerate}
  \item[(1)] $G_2\subset Spin(7)\subset Spin(8)$;
  \item[(2)] $SO(2)\times G_2\subset SO(2)\times SO(7)\subset SO(9)$;
  \item[(3)] $U(k)\subset SO(2k)\subset SO(2k+1)$ for $k\geq 2$;
  \item[(4)] $SU(2r+1)\subset U(2r+1)\subset SO(4r+2)$ for $r\geq 2$;
  \item[(5)] $Spin(7)\subset SO(8)\subset SO(9)$;
  \item[(6)] $SU(m)\times SU(n)\subset S(U(m)U(n))\subset SU(m+n)$ for $m> n\geq 1$;
  \item[(7)] $Sp(n)U(1)\subset S(U(2n)U(1))\subset SU(2n+1)$ for $n\geq 2$;
  \item[(8)] $Sp(n)U(1)\subset Sp(n)\times Sp(1)\subset Sp(n+1)$ for $n\geq 1$;
  \item[(9)] $Spin(10)\subset Spin(10)SO(2)\subset E_6$.
\end{enumerate}
Conversely, if $G/H$ coincides with one in the list in the Lie algebraic level, then
any standard homogeneous $(\alpha_1,\alpha_2)$-metric $F$ on $G/H$ is g.o..
\end{theorem}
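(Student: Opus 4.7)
My plan is to use the metric operator $A_y$ to reduce the Finsler g.o.\ classification to the Riemannian one in \cite{CN}. Write $\mathfrak{m}=\mathfrak{m}_1\oplus\mathfrak{m}_2$ for the irreducible decomposition. By Schur's lemma every $\mathrm{Ad}(H)$-invariant quadratic form on $\mathfrak{m}_i$ is a multiple of $Q|_{\mathfrak{m}_i}$, so any standard $(\alpha_1,\alpha_2)$-metric has the form $F^2(y)=\Phi(|y_1|_Q^2,|y_2|_Q^2)$ for a smooth $1$-homogeneous function $\Phi$, with $y=y_1+y_2$. Differentiating $F^2$ at $y$ and using Euler's identity, I first obtain the compact expression
\begin{equation*}
A_y y=\Phi_1 y_1+\Phi_2 y_2,\qquad\text{where}\qquad \Phi_i=\tfrac{\partial\Phi}{\partial |y_i|_Q^2},
\end{equation*}
even though $A_y$ itself need not preserve the splitting $\mathfrak{m}_1\oplus\mathfrak{m}_2$.

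For the forward direction, I would invoke Lemma \ref{lemma3.4}: the g.o.\ condition at $y$ requires some $Z\in\mathfrak{h}$ with $[y+Z,A_y y]_\mathfrak{m}=0$. Substituting the formula above and splitting along $\mathfrak{m}_1,\mathfrak{m}_2$ via $[\mathfrak{h},\mathfrak{m}_i]\subseteq\mathfrak{m}_i$ gives
\begin{equation*}
\Phi_1[Z,y_1]=(\Phi_1-\Phi_2)[y_1,y_2]_{\mathfrak{m}_1},\qquad \Phi_2[Z,y_2]=(\Phi_1-\Phi_2)[y_1,y_2]_{\mathfrak{m}_2}.
\end{equation*}
Setting $y=r_1 u_1+r_2 u_2$ with $u_i\in\mathfrak{m}_i$ of unit $Q$-length, the two right-hand sides are forced onto a single line in $\mathfrak{m}_1\oplus\mathfrak{m}_2$ of slope $\rho\tau(\rho)$, where $\rho=r_1/r_2$ and $\tau(\rho)=\Phi_1/\Phi_2$ is the $0$-homogeneous ratio. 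The key observation is that for smooth, genuinely non-Riemannian $F$ the map $\rho\mapsto\rho\tau(\rho)$ is non-constant; as $\rho$ varies the Finsler g.o.\ hypothesis then forces \emph{two} independent vectors into the image of the linear map $Z\mapsto([Z,u_1],[Z,u_2])$. By linearity both $([u_1,u_2]_{\mathfrak{m}_1},0)$ and $(0,[u_1,u_2]_{\mathfrak{m}_2})$ lie in this image, which is exactly the Riemannian g.o.\ condition for every metric $\lambda_1 Q|_{\mathfrak{m}_1}+\lambda_2 Q|_{\mathfrak{m}_2}$ with $\lambda_1\ne\lambda_2$. Combining with Lemma \ref{lemma 3.6}, the non-naturally-reductive hypothesis on $F$ rules out $[\mathfrak{m}_1,\mathfrak{m}_2]_\mathfrak{m}=0$, so Theorem 2 of \cite{CN} delivers the nine triples $(H,K,G)$ together with the required intermediate symmetric quotient $G/K$.

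For the converse, in each listed triple $G/K$ is symmetric; setting $\mathfrak{m}_1=\mathfrak{k}\ominus\mathfrak{h}$ and $\mathfrak{m}_2=\mathfrak{g}\ominus\mathfrak{k}$ yields $[\mathfrak{m}_2,\mathfrak{m}_2]\subseteq\mathfrak{h}\oplus\mathfrak{m}_1$, and \cite{CN} hands me the decoupled bracket solvability for every $(u_1,u_2)$. For any standard $(\alpha_1,\alpha_2)$-metric and any $y$, a suitable linear combination of the decoupled elements $Z_1,Z_2\in\mathfrak{h}$ then solves the $(\Phi_1,\Phi_2)$-weighted system pointwise, so $F$ is g.o.

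The hard part will be the analytic claim that $\rho\tau(\rho)$ is non-constant for every smooth, non-Riemannian standard $(\alpha_1,\alpha_2)$-metric. I would dispose of it by directly integrating $\Phi_1 r_1=c\,\Phi_2 r_2$: together with Euler's identity this forces $\Phi=a(c\,|y_1|_Q+|y_2|_Q)^2$ up to scaling, whose square root $F=\sqrt a\,(c\,|y_1|_Q+|y_2|_Q)$ fails to be $C^\infty$ along the axes $y_1=0$ or $y_2=0$ and thus does not belong to the smooth standard $(\alpha_1,\alpha_2)$ class, ensuring the reduction to \cite{CN} applies throughout.
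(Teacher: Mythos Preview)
Your overall strategy matches the paper's: compute $A_y y=\Phi_1 y_1+\Phi_2 y_2$, translate the g.o.\ condition via Lemma~\ref{lemma3.4} into a weighted bracket equation, extract from it the ``decoupled'' solvability condition (for each $u_1,u_2$ there exist $Z_1\in C_\mathfrak{h}(u_2)$, $Z_2\in C_\mathfrak{h}(u_1)$ with $[Z_1,u_1]=[u_1,u_2]_{\mathfrak{m}_1}$ and $[Z_2,u_2]=[u_1,u_2]_{\mathfrak{m}_2}$), and then quote \cite{CN}. The difference is in how you extract the decoupling. You vary $\rho=r_1/r_2$ and argue that $\rho\tau(\rho)$ hits two distinct values, which requires your ODE/smoothness argument. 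The paper instead fixes a single $\lambda$ with $\varphi'(\theta)\neq 0$ and applies the g.o.\ criterion at $\lambda u_1+u_2$ and $\lambda u_1-u_2$; since $\alpha_2(\pm u_2)$ agree, the coefficients $a,b$ are identical at the two points, and simply \emph{adding} the two equations gives $a[u'+u'',u_1]+b[u'-u'',u_2]=0$, hence the decoupling immediately. This reflection trick is shorter and sidesteps the analytic subtlety (you implicitly need $\rho\tau(\rho)$ non-constant on the set $\{\Phi_1\neq\Phi_2\}$, not just on $(0,\infty)$; your argument does force this, but you do not say so).

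There is, however, a genuine gap in your converse. In case~(1), $G_2\subset Spin(7)\subset Spin(8)$, the two seven-dimensional isotropy summands are \emph{equivalent} $G_2$-representations, so there is a full circle of $\mathrm{Ad}(H)$-invariant $Q$-orthogonal decompositions $\mathfrak{m}=\mathfrak{m}_1'+\mathfrak{m}_2'$, and the statement ``any standard homogeneous $(\alpha_1,\alpha_2)$-metric on $G/H$'' quantifies over all of them. Your argument fixes $\mathfrak{m}_1=\mathfrak{k}\ominus\mathfrak{h}$, $\mathfrak{m}_2=\mathfrak{g}\ominus\mathfrak{k}$ and invokes the decoupled bracket condition from \cite{CN} relative to that single decomposition; this does not cover the metrics built from the other decompositions. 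The paper closes this gap by treating $(\mathfrak{g},\mathfrak{h})=(D_4,G_2)$ separately: $Spin(8)/G_2$ is weakly symmetric, hence every homogeneous Finsler metric on it is g.o., regardless of the decomposition used to define it. For cases~(2)--(9) your argument is fine, since there the summands are inequivalent and the decomposition is unique.
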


The notion of $(\alpha_1,\alpha_2)$-metric was introduced in \cite{DX2016}. In \cite{ZX}, we defined the notion of standard
homogeneous $(\alpha_1,\alpha_2)$-metric, and in particular, studied those which have exactly two irreducible summands in their isotropy representations. Theorem \ref{theorem 1.1} provides a  classification for these homogeneous manifolds.

Standard homogeneous $(\alpha_1,\alpha_2)$-metric, with respect to an $\mathrm{Ad}(H)$-invariant $Q$-orthogonal decomposition $\mathfrak{m}=\mathfrak{m}_1+\mathfrak{m}_2$,
is an analog of the standard deformation of normal homogeneous metrics in Riemannian geometry. This notion can be further generalized when there are more summands in $\mathfrak{m}$. Let $G/H$ be the compact homogeneous manifolds mentioned above, and $\mathfrak{m}=\mathfrak{m}_1+\cdots+\mathfrak{m}_s$ be an $\mathrm{Ad}(H)$-invariant decomposition. Then we call $(G/H,F)$ a {\it homogeneous $(\alpha_1,\cdots,\alpha_s)$-metric} with respect to this given decomposition, if $F$ can be presented as $F=\sqrt{L(\alpha_1^2,\cdots,\alpha_s^2)}$, i.e., the Minkowski norm $F$ on $\mathfrak{m}$ has a block-diagonal linear $O(\mathfrak{m}_1)\times\cdots\times O(\mathfrak{m}_s)$-symmetry. In particular, when the decomposition $\mathfrak{m}=\mathfrak{m}_1+\cdots+\mathfrak{m}_s$ is $Q$-orthogonal, and
each orthogonal group $O(\mathfrak{m}_i)$ is with respect to $Q|_{\mathfrak{m}_i\times\mathfrak{m}_i}$, we call $(G/H,F)$ a {\it standard homogeneous $(\alpha_1,\cdots,\alpha_s)$-space} (or {\it standard homogeneous Finsler space} in short).
See Section \ref{section-3-3} for more details.

For example, any (compact) generalized Wallach space $G/H$ admits a canonical decomposition, i.e.,
an  $\mathrm{Ad}(H)$-invariant $Q$-orthogonal decomposition $\mathfrak{m}=\mathfrak{m}_1+\mathfrak{m}_2+\mathfrak{m}_3$,
with $[\mathfrak{m}_i,\mathfrak{m}_i]\subset\mathfrak{h}$ for each $\mathfrak{h}$ \cite{Niko1}.
So this $G/H$ admits standard homogeneous $(\alpha_1,\alpha_2,\alpha_3)$-metrics with respect to this canonical decomposition. We prove

\begin{theorem}\label{theorem 1.2}
Let $G/H$ be a generalized Wallach space, and $F$ a $G$-invariant standard homogeneous $(\alpha_1,\alpha_2,\alpha_3)$-metric, with respect to the canonical decomposition. Suppose that $G/H$ is not a product of three symmetric spaces and $F$ is g.o. on $G/H$, then $F$ is a normal homogeneous Riemannian metric.
\end{theorem}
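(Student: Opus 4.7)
The strategy is to translate the Finsler g.o.\ condition of Lemma \ref{lemma3.4} into explicit equations adapted to the canonical decomposition, and then exploit the rigidity of the Wallach structure to force the three partial derivatives of $L$ to coincide with a common positive constant. Writing $y=y_1+y_2+y_3$ with $y_i\in\mathfrak{m}_i$, $a_i=Q(y_i,y_i)$ and $F^2(y)=L(a_1,a_2,a_3)$ (with $L$ one-homogeneous by Minkowski scaling), a direct computation of the fundamental tensor, together with the Euler identity $\sum_i a_i L_{ij}=0$, yields
\begin{equation*}
A_y y = L_1 y_1+L_2 y_2+L_3 y_3,\qquad L_i:=\partial L/\partial a_i\big|_{(a_1,a_2,a_3)}.
\end{equation*}
The Wallach hypothesis $[\mathfrak{m}_i,\mathfrak{m}_i]\subset\mathfrak{h}$ also forces the (standard) dual inclusion $[\mathfrak{m}_i,\mathfrak{m}_j]\subset\mathfrak{m}_k$ whenever $\{i,j,k\}=\{1,2,3\}$, so $[y,A_y y]\in\mathfrak{m}$ and the g.o.\ condition reduces to: for every $y$ there exists $Z\in\mathfrak{h}$ with $[y+Z,A_y y]=0$. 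Projecting onto $\mathfrak{m}_1$, $\mathfrak{m}_2$, $\mathfrak{m}_3$ respectively yields
\begin{align*}
L_1[Z,y_1]&=-(L_3-L_2)[y_2,y_3],\\
L_2[Z,y_2]&=-(L_3-L_1)[y_1,y_3],\\
L_3[Z,y_3]&=-(L_2-L_1)[y_1,y_2].
\end{align*}

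Pair the $i$-th equation with $y_i$ via $Q$: since $\mathrm{ad}_Z$ is $Q$-antisymmetric each left-hand side vanishes, and the cyclic identities $Q([y_1,y_2],y_3)=Q([y_2,y_3],y_1)=-Q([y_1,y_3],y_2)=:\kappa(y_1,y_2,y_3)$ collapse all three resulting scalar conditions into
\begin{equation*}
(L_1-L_2)\kappa=(L_1-L_3)\kappa=(L_2-L_3)\kappa=0.
\end{equation*}
I expect the hardest part of the proof to be the structural claim that the non-product hypothesis makes $\kappa$ attain nonzero values for arbitrarily prescribed positive norms $a_1,a_2,a_3$. Argue by contraposition: if every cross-bracket $[\mathfrak{m}_i,\mathfrak{m}_j]$, $i\neq j$, vanished, then each $\mathfrak{g}_i:=\mathfrak{m}_i+[\mathfrak{m}_i,\mathfrak{m}_i]$ is an $\mathrm{Ad}(H)$-stable subalgebra that mutually commutes with the others; Jacobi together with the $\mathrm{Ad}(H)$-invariance of each $\mathfrak{m}_j$ then upgrades $\mathfrak{g}_i$ to an ideal of the semisimple $\mathfrak{g}$, and since $\mathfrak{g}_1+\mathfrak{g}_2+\mathfrak{g}_3\supseteq\mathfrak{m}$ its $Q$-orthogonal complement lies inside $\mathfrak{h}$; this passes to a decomposition of $G/H$ as a product of three symmetric spaces, contradicting the hypothesis. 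Thus some cross-bracket is nonzero, say $[\mathfrak{m}_1,\mathfrak{m}_2]\neq 0$ (inside $\mathfrak{m}_3$); for any prescribed $a_1,a_2,a_3>0$ one may choose $y_1\in\mathfrak{m}_1$, $y_2\in\mathfrak{m}_2$ with $[y_1,y_2]\neq 0$, take $y_3$ proportional to $[y_1,y_2]$, and rescale inside each $\mathfrak{m}_i$ so that $Q(y_i,y_i)=a_i$ while preserving $\kappa\neq 0$ by positive-definiteness of $Q$.

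At every such $y$ the relations above force $L_1=L_2=L_3$ at $(a_1,a_2,a_3)$, and the continuity of the $L_i$ extends this equality to the whole closed positive octant. Denoting the common value $\lambda$, the one-homogeneity of $L$ combined with Euler's identity gives $L=\sum_i a_i L_i=(a_1+a_2+a_3)\lambda$; differentiating in $a_i$ yields $L_i=\lambda+(a_1+a_2+a_3)\partial_i\lambda=\lambda$, so $\partial_i\lambda\equiv 0$ on the interior and $\lambda$ reduces to a positive constant $c$. Hence $F^2(y)=c\,Q(y,y)$, which is precisely the normal homogeneous Riemannian metric induced by the $\mathrm{Ad}(G)$-invariant inner product $cQ$, completing the proof.
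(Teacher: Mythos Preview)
Your proposal is correct and follows essentially the same route as the paper: compute $A_y y=\sum L_i y_i$, project the g.o.\ identity $[y+Z,A_y y]=0$ onto each $\mathfrak{m}_i$, pair with $y_i$ via $Q$ to kill the $\mathrm{ad}_Z$ term, and use nonvanishing of $Q([y_1,y_2],y_3)$ for suitably rescaled $y_i$ to force $L_1=L_2=L_3$ on the open octant, hence $L$ linear. The only noteworthy difference is that the paper invokes Nikonorov's classification of generalized Wallach spaces to obtain $[\mathfrak{m}_i,\mathfrak{m}_j]=\mathfrak{m}_k$ in Types II and III directly, whereas you supply a self-contained contraposition argument that the non-product hypothesis guarantees at least one nonzero cross-bracket; both achieve the same end.
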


Because the Wallach spaces $SU(3)/T^2$, $Sp(3)/Sp(1)^3$ and $F_4/Spin(8)$ are generalized Wallach spaces, Theorem \ref{theorem 1.2}
refines Theorem 6.2 in \cite{ZX} (see Corollary \ref{cor-1}).
It reveals the phenomena that sometimes the g.o. property is more algebraic than geometric, i.e., it depends on the homogeneous manifold, not the invariant metric.

This paper is scheduled as follows. In Section 2, we summarize some basic knowledge in homogeneous Finsler geometry. In Section 3, we introduce the notions of Finslerian metric operator and standard homogeneous $(\alpha_1,\cdots,\alpha_s)$-metric.
In Section 4, we discuss the metric operator of a standard homogeneous $(\alpha_1,\alpha_2)$-metric and prove Theorem 1.1. In Section 5, we discuss the standard homogeneous $(\alpha_1,\alpha_2,\alpha_3)$-metric on a generalized Wallach space and prove Theorem 1.2.

\section{Naturally reductive and geodesic orbit Finsler spaces}

Throughout the paper, we work with a homogeneous Finsler space $(G/H,F)$ with a compact connected semi simple $G$. On its Lie algebra $\mathfrak{g}$, we fix the $\mathrm{Ad}(G)$-invariant inner product
$Q(\cdot,\cdot)=-B(\cdot,\cdot)$ where $B$ is the Killing form of $\mathfrak{g}$. With respect to $Q$, we have the orthogonal reductive decomposition $\mathfrak{g}=\mathfrak{h}+\mathfrak{m}$.
The $G$-invariant Finsler metric $F$ is one-to-one determined by $F=F(eH,\cdot)$,
which is an arbitrary $\mathrm{Ad}(H)$-invariant Minkowski norm on $\mathfrak{m}=T_{eH}(G/H)$. we denote by $g_y(\cdot,\cdot)$ the fundamental tensor of the Minkowski norm $F$. See \cite{D1,DH1} for more details.

We call $(G/H,F)$ {\it $G$-naturally reductive} with respect to the orthogonal reductive decomposition
$\mathfrak{g}=\mathfrak{h}+\mathfrak{m}$ if for each nonzero $u\in \mathfrak{m}$, $c(t)=\exp tu \cdot H$ is a geodesic, or equivalently speaking, $g_u(u,[u,v]_\mathfrak{m})=0$, $\forall v\in\mathfrak{m}$,
in which the subscript $\mathfrak{m}$ means projecting to $\mathfrak{m}$ with respect to the given reductive decomposition \cite{DH2}. Notice that
a different definition for naturally reductive Finsler space was proposed in \cite{L}.
It turns out that the definition in \cite{DH1} is more convenient and both are equivalent \cite{ZYD2023}.

We call $(G/H,F)$ {\it geodesic orbit} (or {\it g.o.} in short) if any geodesic $c(t)$ is homogeneous, i.e., $c(t)=\exp tX \cdot x$ for some $X\in \mathfrak{g}$.
The following equivalent descriptions for the Finsler g.o. property is well known \cite{X,YD}.

\begin{lemma}\label{lemma 2.1}
 Let $(G/H,F)$ be a homogeneous Finsler space, with a reductive decomposition
 $\mathfrak{g} = \mathfrak{h} + \mathfrak{m}$, and denote $[\cdot,\cdot]_\mathfrak{m}$ the $\mathfrak{m}$-factor in the bracket operation $[\cdot,\cdot]$.
Then the following are equivalent:
\begin{enumerate}
  \item[(1)] $F$ is $G$-g.o.;
  \item[(2)] for any $x\in M$, and any nonzero $y\in T_xM$, we can find a Killing vector field $X\in \mathfrak{g}$ such that $X(x)=y$ and $x$ is a critical point for the function $f(\cdot)=F(X(\cdot))$;
  \item[(3)] for any nonzero vector $u\in \mathfrak{m}$, there exists $u'\in \mathfrak{h}$ such that
  $g_u([u+u',\mathfrak{m}]_{\mathfrak{m}},u)=0$;
  \item[(4)] the spray vector field $\eta(\cdot):\mathfrak{m}\backslash \{0\}\rightarrow \mathfrak{m}$ is tangent to the $Ad(H)$-orbits.
\end{enumerate}
\end{lemma}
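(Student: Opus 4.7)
The plan is to prove the equivalences in the order (1) $\Leftrightarrow$ (3) $\Leftrightarrow$ (4), and then (1) $\Leftrightarrow$ (2). By the $G$-invariance of $F$ it suffices to test the g.o. property for geodesics issuing from the origin $o = eH$; an initial velocity at $o$ corresponds to a nonzero $u \in \mathfrak{m}$, and any Killing field $X \in \mathfrak{g}$ whose orbit through $o$ is tangent to $u$ has the form $X = u + u'$ with $u' \in \mathfrak{h}$. Thus the problem is reduced to deciding, for each nonzero $u$, whether some such $u'$ makes $\exp(tX)\cdot o$ a geodesic.

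For (1) $\Leftrightarrow$ (3), I would write the Finsler geodesic equation $D_{\dot c}\dot c = 0$ at $t = 0$ for $c(t) = \exp(tX)\cdot o$ using the Chern connection and the reductive splitting $\mathfrak{g} = \mathfrak{h} + \mathfrak{m}$. A routine computation in the style of \cite{DH1,YD} reduces this, via pairing against an arbitrary $v \in \mathfrak{m}$ through the fundamental tensor, to
\begin{equation*}
g_u([u+u', v]_{\mathfrak{m}}, u) = 0 \quad \text{for every } v \in \mathfrak{m},
\end{equation*}
which is condition (3). Since geodesics are determined by their initial conditions and $G$ acts transitively, the existence of such a $u'$ for every nonzero $u$ is equivalent to $F$ being $G$-g.o. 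For (3) $\Leftrightarrow$ (4), I would appeal to the defining property of the spray vector field $\eta$, which on $\mathfrak{m} \setminus \{0\}$ encodes exactly the tangential obstruction to the geodesic equation in the same pairing against $\mathfrak{m}$; the displayed identity above is then read as $\eta(u) \in \mathrm{ad}(\mathfrak{h})u$, i.e., $\eta(u)$ is tangent to the $\mathrm{Ad}(H)$-orbit through $u$.

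Finally, for (1) $\Leftrightarrow$ (2), I would invoke the Finslerian analogue of a classical theorem of Kostant--Kowalski--Vanhecke: for a Killing field $X$, the integral curve $t \mapsto \exp(tX)\cdot x$ is (up to affine reparametrization) a geodesic of $F$ if and only if $x$ is a critical point of the function $f(\cdot) = F(X(\cdot))$ on $M$. This is obtained by applying the first variation formula to $F^2$ and using the invariance $\mathcal{L}_X F = 0$ to convert the Euler--Lagrange expression for $\exp(tX)\cdot x$ into the differential of $f$. Once this characterization is in hand, (1) $\Rightarrow$ (2) follows by choosing $X$ to be the Killing field producing the g.o. geodesic through $x$ with velocity $y$, and (2) $\Rightarrow$ (1) follows because (2) produces, for each initial condition $(x, y)$, a homogeneous geodesic realizing it, and geodesics are determined by initial conditions. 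The main technical obstacle I anticipate is precisely the variational critical-point characterization in the non-quadratic Finsler setting (where $g_y$ genuinely depends on $y$); the algebraic equivalences (1) $\Leftrightarrow$ (3) $\Leftrightarrow$ (4) are then essentially bookkeeping in the reductive decomposition.
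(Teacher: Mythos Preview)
The paper does not prove this lemma at all: it is stated as a well-known result with citations to \cite{X,YD}, followed only by the remark that the spray vector field $\eta$ is defined by Huang \cite{H1} and satisfies $g_y(\eta(y),u)=g_y(y,[u,y]_{\mathfrak{m}})$. So there is no ``paper's own proof'' to compare against.

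Your outline is a reasonable sketch of how the argument runs in the cited literature. The equivalence (1)$\Leftrightarrow$(3) is exactly the main result of \cite{YD}; your reduction via the reductive splitting and the Chern connection is the right shape, though in an actual write-up you would want to be explicit that the geodesic condition for $c(t)=\exp(tX)\cdot o$ reduces to $\eta(u)=[u',u]$ (equivalently $g_u(u,[u+u',v]_\mathfrak{m})=0$ for all $v$) rather than leaving it as a ``routine computation''. The equivalence (3)$\Leftrightarrow$(4) is immediate from the displayed formula for $\eta$: since $g_u$ is nondegenerate, condition (3) says $\eta(u)=-[u',u]=\mathrm{ad}(u')u$, which is precisely tangency to the $\mathrm{Ad}(H)$-orbit through $u$. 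The equivalence (1)$\Leftrightarrow$(2) via the critical-point characterization of homogeneous geodesics is the content of \cite{X}; your first-variation argument is the correct mechanism, and the dependence of $g_y$ on $y$ is handled there without serious difficulty because one differentiates $F(X(\cdot))$ along curves and the Cartan tensor contributions cancel by homogeneity. One small gap: in (2)$\Rightarrow$(1) you should also note that $F(X(\cdot))$ is constant along the orbit (by $G$-invariance), so the criticality at $x$ propagates to every point of the orbit, giving a genuine geodesic rather than just vanishing initial acceleration.
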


Here the {\it spray vector field} $\eta(\cdot)$ was defined by L. Huang \cite{H1}, and it satisfies
$g_y(\eta(y),u)=g_y(y,[u,y]_{\mathfrak{m}})$, $\forall u\in \mathfrak{m}$.

\section{Metric operator and standard homogeneous Finsler metric}

\subsection{Metric operator for a homogeneous Finsler metric}

\begin{definition}
For each nonzero vector $y\in\mathfrak{m}$,
the metric operator of $(G/H,F)$ is the $g_y$-symmetric, positive definite linear endomorphism $A_y$ on $\mathfrak{m}$ determined by $g_y(u,v)=Q(A_y(u),v)$, $\forall u,v\in \mathfrak{m}$.
 \end{definition}

In particular, when $F$ is Riemannian, $A_y$ is irrelevant to $y$ and coincides with the Riemannian metric operator. Notice that each $A_y$ may not be $\mathrm{Ad}(H)$-equivariant as in Riemannian geometry, but the $\mathrm{Ad}(H)$-invariance of the Minkowski norm $F$ can still imply
\begin{lemma}\label{Proposition 3.2}The metric operator of $(G/H,F)$ satisfies:
\begin{enumerate}
\item[(1)] $\mathrm{Ad}(g)\circ A_y\circ \mathrm{Ad}(g)^{-1}=A_{\mathrm{Ad}(g)y}$, $\forall g\in H$, $ y\in\mathfrak{m}\backslash\{0\}$;
\item[(2)]
$[A_y,\mathrm{ad}(w)](y)=0$, $\forall y\in\mathfrak{m}\backslash \{0\},w\in \mathfrak{h}$.
\end{enumerate}
\end{lemma}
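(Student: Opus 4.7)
The plan is to obtain (1) directly from the $\mathrm{Ad}(H)$-invariance of $F$ and the bi-invariance of $Q$, and then to derive (2) by linearizing (1) along a one-parameter subgroup $\exp(tw)\subset H$; the only subtle step will be a Cartan-tensor cancellation. For (1), the $\mathrm{Ad}(H)$-invariance of $F$ propagates to the fundamental tensor as $g_{\mathrm{Ad}(h)y}(\mathrm{Ad}(h)u,\mathrm{Ad}(h)v)=g_y(u,v)$ for all $h\in H$, $y\in\mathfrak{m}\backslash\{0\}$ and $u,v\in\mathfrak{m}$. Replacing $u,v$ by $\mathrm{Ad}(h^{-1})u,\mathrm{Ad}(h^{-1})v$ and invoking the $\mathrm{Ad}(G)$-invariance of $Q$, the defining identity for $A_y$ gives
\begin{equation*}
Q(A_{\mathrm{Ad}(h)y}u,v)=g_y(\mathrm{Ad}(h^{-1})u,\mathrm{Ad}(h^{-1})v)=Q(\mathrm{Ad}(h)\circ A_y\circ\mathrm{Ad}(h^{-1})(u),v),
\end{equation*}
so the non-degeneracy of $Q$ forces $A_{\mathrm{Ad}(h)y}=\mathrm{Ad}(h)\circ A_y\circ\mathrm{Ad}(h^{-1})$, which is (1).

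For (2), I would set $h=\exp(tw)$ with $w\in\mathfrak{h}$ in (1), apply both sides to the fixed vector $y$, and differentiate at $t=0$. The right-hand side produces $[\mathrm{ad}(w),A_y](y)$, while the left-hand side gives $(dA)_y([w,y])(y)$, where $dA$ denotes the differential of the map $\mathfrak{m}\backslash\{0\}\to\mathrm{End}(\mathfrak{m})$, $y\mapsto A_y$. Thus (2) reduces to the vanishing $(dA)_y([w,y])(y)=0$.

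The main obstacle---really the only subtle point---is this vanishing, a genuinely Finslerian phenomenon with no Riemannian analogue (in the Riemannian case $A_y$ does not depend on $y$, so $dA=0$ tautologically). From $Q((dA)_y(z)u,v)=(dg)_y(z)(u,v)=2C_y(u,v,z)$, together with the Cartan-tensor identity $C_y(y,\cdot,\cdot)=0$ (itself a consequence of the $0$-homogeneity $g_{\lambda y}=g_y$ for $\lambda>0$), setting $u=y$ gives $Q((dA)_y(z)(y),v)=0$ for every $v\in\mathfrak{m}$, whence $(dA)_y(z)(y)=0$ for every $z\in\mathfrak{m}$. Plugging $z=[w,y]$ back into the linearization of (1) then yields $[A_y,\mathrm{ad}(w)](y)=0$, establishing (2).
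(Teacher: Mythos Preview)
Your proof is correct and uses essentially the same ingredients as the paper: the $\mathrm{Ad}(H)$-invariance of the fundamental tensor together with the $\mathrm{Ad}(G)$-invariance of $Q$ for (1), and the Cartan-tensor vanishing $C_y(y,\cdot,\cdot)=0$ for (2). The only cosmetic difference is that the paper differentiates the fundamental-tensor invariance $g_{\mathrm{Ad}(\exp tw)y}(\mathrm{Ad}(\exp tw)y,\mathrm{Ad}(\exp tw)v)=g_y(y,v)$ directly to obtain $g_y([w,y],v)+g_y(y,[w,v])=0$ and then translates this into the operator identity, whereas you first pass to the operator statement (1) and then differentiate; the Cartan-tensor cancellation plays the same role in both arguments.
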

\begin{proof}
Since the Minkowski norm $F$ on $\mathfrak{m}$
 has the $Ad(H)$-invariance, so does its fundamental tensor, i.e.,
\begin{equation}\label{002}
g_{\mathrm{Ad}(g)y}(\mathrm{Ad}(g)u,\mathrm{Ad}(g)v)=g_y(u,v),\quad
\forall g\in H, y\in\mathfrak{m}\backslash\{0\}, u,v\in\mathfrak{m}.
\end{equation}
So we have for any $g\in H$ and $u,v\in\mathfrak{m}$,
\begin{eqnarray*}
& &Q(A_{\mathrm{Ad}(g)y}(\mathrm{Ad}(g)u),\mathrm{Ad}(g)v)
=g_{\mathrm{Ad}(g)y}(\mathrm{Ad}(g)u,\mathrm{Ad}(g)v)\\&=&g_y(u,v)
=Q(A_\mathrm{y}(u),v)=Q(\mathrm{Ad}(g)(A_\mathrm{y}(u)),\mathrm{Ad}(g)v)\\
&=&Q((\mathrm{Ad}(g)\circ A_\mathrm{y}\circ\mathrm{Ad}(g)^{-1})(\mathrm{Ad}(g)u),\mathrm{Ad}(g)v),
\end{eqnarray*}
which proves (1).

Input $g=\exp tw$ with $w\in \mathfrak{h}$ and $u=y$ into (\ref{002}),
differentiate it with respect to $t$, and take $t=0$,
we get
\begin{equation}
g_y([w,y],v)+g_y(y,[w,v])+2C_{y}([w,y],y,v)=g_y([w,y],v)+g_y(y,[w,v])=0.\label{004}
\end{equation}
So for any $w\in\mathfrak{h}$, $y\in\mathfrak{m}\backslash\{0\}$ and $v\in\mathfrak{m}$,
\begin{eqnarray*}
& &Q((A_y\circ \mathrm{ad}(w))(y),v)=g_y([w,y],v)=-g_y(y,[w,v])\\
&=&-Q(A_y(y),[w,v])
=Q([w,A_y(y)],v)=Q((\mathrm{ad}(w)\circ A_y)(y),v),
\end{eqnarray*}
i.e., $[A_y,\mathrm{ad}(w)](y)=(A_y\circ \mathrm{ad}(w)-\mathrm{ad}(w)\circ A_y)(y)=0$, which proves
(2).
\end{proof}

We can use the metric operator to describe
the naturally reductive and  geodesic orbit properties in Finsler geometry.
For the g.o. property, we have
\begin{lemma}\label{lemma3.4}
$(G/H,F)$ is g.o. if and only if for any nonzero vector $u\in\mathfrak{m}$, there exists $u'\in\mathfrak{h}$, such that $[u'+u, A_u(u)]=0$.
\end{lemma}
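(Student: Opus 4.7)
The plan is to reduce Lemma \ref{lemma3.4} to the equivalent g.o.\ characterization already given in Lemma \ref{lemma 2.1}(3) by rewriting $g_u([u+u',v]_\mathfrak{m},u)$ in terms of the metric operator $A_u$ and then using $\mathrm{Ad}$-invariance of $Q$ to move the bracket off of $v$ and onto $A_u(u)$.

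Concretely, I would start from Lemma \ref{lemma 2.1}(3): $F$ is g.o.\ iff for every nonzero $u\in\mathfrak{m}$ there exists $u'\in\mathfrak{h}$ with $g_u([u+u',v]_\mathfrak{m},u)=0$ for all $v\in\mathfrak{m}$. Since $A_u(u)\in\mathfrak{m}$ is $Q$-orthogonal to $\mathfrak{h}$, the $\mathfrak{m}$-subscript may be dropped, giving $g_u([u+u',v]_\mathfrak{m},u)=Q([u+u',v],A_u(u))$; by $\mathrm{Ad}$-invariance of $Q$ this equals $-Q(v,[u+u',A_u(u)])$. The g.o.\ condition therefore translates into the assertion that $[u+u',A_u(u)]$ is $Q$-orthogonal to all of $\mathfrak{m}$, i.e.\ its $\mathfrak{m}$-component vanishes.

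The second step, which is really the substantive content, is to check that $[u+u',A_u(u)]$ has no $\mathfrak{h}$-component a priori, so that the vanishing of its $\mathfrak{m}$-component is equivalent to the full bracket vanishing. The piece $[u',A_u(u)]$ lies in $\mathfrak{m}$ by reductiveness. For $[u,A_u(u)]$ I would use the infinitesimal $\mathrm{Ad}(H)$-invariance identity (\ref{004}) from the proof of Lemma \ref{Proposition 3.2}, specialized at $v=u$: this yields $g_u(u,[w,u])=0$ for every $w\in\mathfrak{h}$. Then by $\mathrm{Ad}$-invariance of $Q$,
\[
Q([u,A_u(u)],w)=Q(A_u(u),[w,u])=g_u(u,[w,u])=0\qquad\text{for all }w\in\mathfrak{h},
\]
whence $[u,A_u(u)]_\mathfrak{h}=0$.

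Combining the two steps, the condition in Lemma \ref{lemma 2.1}(3) is equivalent to $[u+u',A_u(u)]=0$, which is the statement of the lemma. The main pitfall, and really the only nontrivial ingredient beyond bookkeeping with $Q$, is the second step: one must observe that $[u,A_u(u)]$ automatically lies in $\mathfrak{m}$, which is where the $\mathrm{Ad}(H)$-invariance of the fundamental tensor is used; without this, only the weaker conclusion $[u+u',A_u(u)]_\mathfrak{m}=0$ could be drawn.
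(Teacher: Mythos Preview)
Your proposal is correct and follows essentially the same route as the paper's proof: both translate Lemma~\ref{lemma 2.1}(3) via $Q$ and $A_u$ into the vanishing of the $\mathfrak{m}$-component of $[u+u',A_u(u)]$, and then use (\ref{004}) together with reductiveness to show that the $\mathfrak{h}$-component vanishes automatically. The only cosmetic difference is that you argue the equivalence in one pass, whereas the paper treats one implication in detail and declares the converse similar.
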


\begin{proof}First, we assume that $(G/H,F)$ is g.o.. Let $u$ be any vector in $\mathfrak{m}\backslash\{0\}$. By Lemma \ref{lemma 2.1}, there exists
$u'\in \mathfrak{h}$, such that $g_u(u,[u+u',\mathfrak{m}])=0$. Then we have
\begin{eqnarray*}
Q(\mathfrak{m},[u+u',A_u(u)])=Q([\mathfrak{m}, u+u'],A_u(u))=
 g_u([\mathfrak{m},u+u']_{\mathfrak{m}},u)=0,
\end{eqnarray*}
i.e., $[u+u',A_u(u)]\in\mathfrak{h}$.

On the other hand, (\ref{004}) implies $g_u([u,\mathfrak{h}],u)=0$. So we have
$$Q([u,A_u(u)],\mathfrak{h})=Q(A_u(u),[u,\mathfrak{h}])=g_u([u,\mathfrak{h}],u)=0,$$
i.e., $[u,A_u(u)]\in\mathfrak{m}$. Together with the obvious fact that $[u',A_u(u)]\in[\mathfrak{h},\mathfrak{m}]\subset\mathfrak{m}$, we get
$[u+u',A_u(u)]\in\mathfrak{m}$.

To summarize, above argument proves $[u+u',A_u(u)]=0$, i.e., it proves one direction of Lemma \ref{lemma3.4}. The  other direction can be proved similarly.
\end{proof}

For the natural reductiveness, we have
\begin{lemma}\label{lemma 3.6}
$(G/H,F)$ is naturally reductive if and only if
$
   [u,A_u(u)]=0.
  $
for any nonzero vector $u\in \mathfrak{m}$.
\end{lemma}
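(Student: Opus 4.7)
The plan is to translate the definition of natural reductiveness given in Section~2, namely $g_u(u,[u,v]_{\mathfrak{m}})=0$ for every $v\in\mathfrak{m}$, into a statement about $[u,A_u(u)]$ using the same two ingredients that drove the proof of Lemma~\ref{lemma3.4}: (a) the definition $g_u(\cdot,\cdot)=Q(A_u\cdot,\cdot)$, and (b) the $\mathrm{Ad}(G)$-invariance of $Q$.

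For the forward direction, first I would observe that since $A_u(u)\in\mathfrak{m}$ and the reductive decomposition $\mathfrak{g}=\mathfrak{h}+\mathfrak{m}$ is $Q$-orthogonal, one has $Q(A_u(u),[u,v]_{\mathfrak{m}})=Q(A_u(u),[u,v])$. Then $\mathrm{Ad}$-invariance gives $Q(A_u(u),[u,v])=-Q([u,A_u(u)],v)$, so the natural reductiveness condition reads
\begin{equation*}
Q([u,A_u(u)],v)=0 \quad \forall v\in\mathfrak{m},
\end{equation*}
which is equivalent to $[u,A_u(u)]\in\mathfrak{h}$. To finish, I need to show the bracket also lies in $\mathfrak{m}$; here I would re-use the observation made in the proof of Lemma~\ref{lemma3.4} coming from equation (\ref{004}): $g_u([u,\mathfrak{h}],u)=0$. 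Indeed, for any $w\in\mathfrak{h}$,
\begin{equation*}
Q([u,A_u(u)],w)=Q(A_u(u),[u,w])=g_u(u,[u,w])=0,
\end{equation*}
so $[u,A_u(u)]\in\mathfrak{m}$. Combined with $[u,A_u(u)]\in\mathfrak{h}$, this forces $[u,A_u(u)]=0$.

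The converse is the easy direction: if $[u,A_u(u)]=0$, then running the $\mathrm{Ad}$-invariance identity backwards gives $g_u(u,[u,v]_{\mathfrak{m}})=-Q([u,A_u(u)],v)=0$ for every $v\in\mathfrak{m}$, which is exactly the natural reductiveness criterion recalled in Section~2.

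There is no real obstacle here beyond bookkeeping. The only subtle point is the argument that $[u,A_u(u)]\in\mathfrak{m}$, which looks extra but is in fact automatic from the $\mathrm{Ad}(H)$-invariance of $F$ via (\ref{004}); this is precisely the trick that was already isolated in the proof of Lemma~\ref{lemma3.4}, so the proof of Lemma~\ref{lemma 3.6} is essentially a simplified version of that proof with $u'=0$.
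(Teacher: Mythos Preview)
Your proof is correct and is exactly what the paper has in mind: it explicitly says ``We skip its proof, which is similar to that of Lemma~\ref{lemma3.4},'' and your argument is precisely that proof specialized to $u'=0$. The only cosmetic point is a sign in the line $Q([u,A_u(u)],w)=Q(A_u(u),[u,w])$ (invariance of $Q$ gives a minus sign here), but since the right-hand side is zero anyway this does not affect the argument.
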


We skip its proof, which is similar to that of Lemma \ref{lemma3.4}.

\subsection{Standard homogeneous $(\alpha_1,\alpha_2,\cdots, \alpha_s)$-metric}
\label{section-3-3}

Now we generalize the standard homogeneous $(\alpha_1,\alpha_2)$-metric in \cite{ZX}.
Suppose that $\mathfrak{m}$ has an $\mathrm{Ad}(H)$-invariant $Q$-orthogonal decomposition
$\mathfrak{m}=\mathfrak{m}_1+\cdots+\mathfrak{m}_s$. We have quadratic functions $\alpha_i(y)=\alpha_i(y_i)=Q(y_i,y_i)$, with $y=y_1+\cdots+y_s$ and $y_i\in\mathfrak{m}_i$ for each $i$. Since each $\alpha_i$ is $\mathrm{Ad}(H)$-invariant, a Minkowski norm on $\mathfrak{m}$, which is of the form
$F=\sqrt{L(\alpha_1^2,\cdots,\alpha_s^2)})$, is also $\mathrm{Ad}(H)$-invariant. It induces a homogeneous Finsler metric $F$ on $G/H$, which is called a {\it standard homogeneous $(\alpha_1,\cdots,\alpha_s)$-metric} (or simply a {\it standard homogeneous Finsler metric}).
The function $L=L(\theta_1,\cdots,\theta_s)$ here
is a positive smooth function on $[0,+\infty)^s\backslash\{(0,\cdots,0)\}$.
Obviously it must satisfy the positive 1-homogeneity:
\begin{equation*}
  L(t\theta_1,t\theta_2,\cdots,t\theta_s)=tL(\theta_1,\theta_2,\cdots,\theta_s), \quad\forall t\geq0,\theta_1\geq0,\cdots\theta_s\geq0.
\end{equation*}
Further more, $L$ must satisfy certain differential inequalities for $F=\sqrt{L(\alpha_1^2,\cdots,\alpha_s^2)}$ to be strong convex (see \cite{DX2016,TX2023}
when $s=2$).

Using the form $F=\sqrt{L(\alpha_1^2,\cdots,\alpha_s^2)}$, we may more generally define non-standard homogeneous $(\alpha_1,\cdots,\alpha_s)$-metrics and non-homogeneous $(\alpha_1,\cdots,\alpha_s)$-metrics.

\begin{remark}
Any standard homogeneous Finsler metric is reversible because the Minkowski norm has
a canonical block-diagonal linear $O(\mathfrak{m}_1)\times\cdots\times O(\mathfrak{m}_s)$-symmetry, where each orthogonal group $O(\mathfrak{m}_i)$ is
with respect to $Q|_{\mathfrak{m}_i\times\mathfrak{m}_i}$. This linear symmetry characterizes $(\alpha_1,\cdots,\alpha_s)$-metrics. If we replace it by the weaker $SO(\mathfrak{m}_1)\times\cdots\times SO(\mathfrak{m}_s)$-symmetry, then irreversible
standard homogeneous Finsler metrics may be defined $\mathrm{(}$for example, standard homogeneous $(\alpha,\beta)$-metric, etc.$\mathrm{)}$.
\end{remark}

\subsection{Metric operator of a standard homogeneous $(\alpha_1,\cdots,\alpha_s)$-metric}
Assume that $F=$ $\sqrt{L(\alpha_1^2,\cdots,\alpha_s^2)}$ is a standard homogeneous $(\alpha_1,\alpha_2,\cdots,\alpha_s)$-metric on $G/H$ with respect to the decomposition
$\mathfrak{m}=\mathfrak{m}_1+\cdots+\mathfrak{m}_s$.
Denote by $\{e^i_{l},\forall  1\leq l\leq n_i\}$ be a $Q$-orthonormal basis in $\mathfrak{m}_i$, where $n_i=\dim\mathfrak{m}_i$.
Any $y\in \mathfrak{m}$ can be presented as
$y=\sum_{i=1}^s y_i$ with $y_i=\sum_{l=1}^{n_i}y_i^le^i_l\in\mathfrak{m}_i$. When $y\neq0$, the fundamental tensors at $y$ are
\begin{eqnarray*}
& &g_y(e_i^a,e_j^a)
=2\frac{\partial^2 L}{\partial\theta_a^2}y^i_a y^j_a+\frac{\partial L}{\partial\theta_a}\delta_{ij},
\quad\forall 1\leq a\leq s, 1\leq i,j\leq n_a,\\
& &g_y(e_i^a,e_j^b)=2\frac{\partial^2 L}{\partial \theta_a\partial \theta_b}y^i_a y^j_b,
\quad\forall 1\leq a\neq b\leq s, 1\leq i\leq n_a, 1\leq j\leq n_b,
\end{eqnarray*}
where the partial derivatives of $L$ are evaluated at $(\alpha_1^2(y_1),\cdots,\alpha_s^2(y_s))$
(same below).
Then  the Hessian matrix of $\tfrac{1}{2}F^2$
can be presented as $(G_{n_i\times n_j})_{1\leq i,j\leq s}$,

where
\begin{equation*}
G_{n_a\times n_a}(y)=\frac{\partial L}{\partial\theta_a}
\begin{pmatrix}
                  1       &                     &        &                    \\
                          & 1                   &        &                    \\
                          &                     & \ddots &                    \\
                          &                     &        & 1
\end{pmatrix}
+2\frac{\partial^2L}{\partial \theta_a^2}
\begin{pmatrix}
     y_a^1y_a^1           & y_a^1y_a^2          & \cdots &y_a^1y_a^{n_a} \\
     y_a^2y_a^1           & y_a^2y_a^2          & \cdots &y_a^2y_a^{n_a} \\
        \vdots            &    \vdots           & \ddots &   \vdots           \\
     y_a^{n_a}y_a^1   & y_a^{n_a}y_a^{n_2}      & \cdots &y_a^{n_a}y_a^{n_a}
\end{pmatrix}
\end{equation*}
for $1\leq a\leq s$, and
\begin{equation*}
G_{n_a\times n_b}(y)=
2\frac{\partial^2L}{\partial \theta_a\partial \theta_b}
\begin{pmatrix}
     y_a^1y_b^1           & y_a^1y_b^2          & \cdots &y_a^1y_b^{n_b} \\
     y_a^2y_b^1           & y_a^2y_b^2          & \cdots &y_a^2y_b^{n_b} \\
        \vdots            &    \vdots           & \ddots &   \vdots           \\
     y_a^{n_a}y_b^1   & y_a^{n_a}y_b^{n_2}      & \cdots &y_a^{n_a}y_b^{n_b}
\end{pmatrix}
\end{equation*}
for $1\leq a\neq b\leq s$.

Now we discuss the metric operator $A_u$
for $(G/H,F)$ with $u\in\mathfrak{m}\backslash\{0\}$.
The previous calculation for the fundamental tensor implies

\begin{lemma}\label{proposition 3.10}
For any nonzero vector $u=u_1+u_2+\cdots+u_s$ with $u_i\in\mathfrak{m}_i$ for each $i$, $v,v'\in \mathfrak{m}_j$, $w\in \mathfrak{m}_k$ and $1\leq j\neq k\leq s$, we have
\begin{equation*}
  Q(A_u(v),w)=2\frac{\partial^2L}{\partial \theta_j\partial \theta_k}Q(u_j,v)Q(u_k,w)
\end{equation*}and
\begin{equation*}
  Q(A_u(v),v')=\frac{\partial L}{\partial \theta_j}Q(v,v')+2\frac{\partial^2L}{\partial \theta_j^2}Q(u_j,v)Q(u_j,v').
\end{equation*}
\end{lemma}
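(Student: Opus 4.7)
The plan is to read off both identities directly from the defining relation $Q(A_u(v),w)=g_u(v,w)$ by expanding $v$ and $w$ (or $v$ and $v'$) in the $Q$-orthonormal basis $\{e^i_l\}$ introduced just above the lemma, and then substituting the block-form expression for the fundamental tensor that was computed in the paragraph preceding the statement. Since $A_u$ is uniquely determined by its $Q$-inner products with all vectors in $\mathfrak{m}$, this is enough to characterize $A_u(v)$.

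For the first identity, with $v\in\mathfrak{m}_j$, $w\in\mathfrak{m}_k$ and $j\neq k$, I will write $v=\sum_i v^i e^j_i$ and $w=\sum_l w^l e^k_l$, apply bilinearity of $g_u$, and invoke the off-diagonal block formula
\[
g_u(e^j_i,e^k_l)=2\tfrac{\partial^2 L}{\partial\theta_j\partial\theta_k}\,u_j^i\,u_k^l.
\]
The double sum $\sum_{i,l}v^i w^l u_j^i u_k^l$ factors as $\bigl(\sum_i v^i u_j^i\bigr)\bigl(\sum_l w^l u_k^l\bigr)=Q(u_j,v)\,Q(u_k,w)$, because $\{e^j_i\}$ (resp.\ $\{e^k_l\}$) is $Q$-orthonormal in $\mathfrak{m}_j$ (resp.\ $\mathfrak{m}_k$). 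Multiplying by $2\partial^2L/\partial\theta_j\partial\theta_k$ gives the claimed formula.

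For the second identity, with $v,v'\in\mathfrak{m}_j$, I write $v=\sum_i v^i e^j_i$, $v'=\sum_l v'^l e^j_l$ and use the diagonal block formula
\[
g_u(e^j_i,e^j_l)=\tfrac{\partial L}{\partial\theta_j}\delta_{il}+2\tfrac{\partial^2 L}{\partial\theta_j^2}u_j^i u_j^l.
\]
The term with $\delta_{il}$ collapses to $\tfrac{\partial L}{\partial\theta_j}\sum_i v^i v'^i=\tfrac{\partial L}{\partial\theta_j}Q(v,v')$, and the remaining piece factors, by the same orthonormality argument, into $2\tfrac{\partial^2 L}{\partial\theta_j^2}Q(u_j,v)Q(u_j,v')$.

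There is essentially no obstacle here; the lemma is a bookkeeping consequence of the block Hessian formula. The only point worth being careful about is that the partial derivatives of $L$ are evaluated at the same point $(\alpha_1^2(u_1),\dots,\alpha_s^2(u_s))$ throughout the expansion (the remark in parentheses after the Hessian formula already fixes this convention), and that the basis is $Q$-orthonormal within each $\mathfrak{m}_i$ so that expansion coefficients coincide with $Q$-inner products.
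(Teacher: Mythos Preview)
Your proof is correct and is precisely the approach the paper takes: the paper itself gives no separate argument for this lemma, simply prefacing it with ``The previous calculation for the fundamental tensor implies,'' so the intended proof is exactly the basis expansion and block-formula substitution you carried out.
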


Let $I=\{i_1,\cdots,i_k\}$ be any subset of $\{1,\cdots,s\}$
and $\mathfrak{m}_I=\mathfrak{m}_{i_1}+\cdots+\mathfrak{m}_{i_k}$.
Obviously the $Q$-orthogonal complement of $\mathfrak{m}_I$ is $\mathfrak{m}_J$
for $J=\{1,\cdots,s\}\backslash I$.

\begin{lemma}\label{Proposition 3.8}
For each nonzero vector $u=u_{1}+\cdots+u_{s}$ with $u_i\in \mathfrak{m}_i$ for each $i$,
we have $A_u(u)=\sum_{i=1}^s\frac{\partial L}{\partial \theta_i}u_i$.
In particular, we have $A_u(u)\in\mathfrak{m}_I$ for $u\in\mathfrak{m}_I\backslash\{0\}$.
\end{lemma}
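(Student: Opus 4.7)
The plan is to test the candidate identity $A_u(u)=\sum_i\tfrac{\partial L}{\partial\theta_i}u_i$ against arbitrary vectors from each summand $\mathfrak{m}_k$ using the formulas of Lemma \ref{proposition 3.10}, and then to kill the second-derivative terms by invoking Euler's identity for the positive $1$-homogeneity of $L$.

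Concretely, I would first expand $A_u(u)=\sum_{j=1}^s A_u(u_j)$ and, for any fixed $w\in\mathfrak{m}_k$, split $Q(A_u(u),w)=\sum_j Q(A_u(u_j),w)$ into the $j=k$ contribution and the $j\neq k$ contributions. For $j\neq k$, the second formula of Lemma \ref{proposition 3.10} gives a term proportional to $\tfrac{\partial^2 L}{\partial\theta_j\partial\theta_k}Q(u_j,u_j)Q(u_k,w)=\tfrac{\partial^2 L}{\partial\theta_j\partial\theta_k}\theta_j Q(u_k,w)$, where I write $\theta_j=Q(u_j,u_j)$ (the arguments at which the derivatives of $L$ are evaluated). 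For $j=k$, the first formula of Lemma \ref{proposition 3.10} contributes $\tfrac{\partial L}{\partial\theta_k}Q(u_k,w)+2\tfrac{\partial^2 L}{\partial\theta_k^2}\theta_kQ(u_k,w)$. Collecting everything,
\begin{equation*}
Q(A_u(u),w)=\tfrac{\partial L}{\partial\theta_k}Q(u_k,w)+2\Bigl(\sum_{j=1}^s\theta_j\tfrac{\partial^2 L}{\partial\theta_j\partial\theta_k}\Bigr)Q(u_k,w).
\end{equation*}

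The key step is now to show that the bracket on the right vanishes. Positive $1$-homogeneity of $L$ gives Euler's identity $\sum_j\theta_j\tfrac{\partial L}{\partial\theta_j}=L$; differentiating in $\theta_k$ yields $\tfrac{\partial L}{\partial\theta_k}+\sum_j\theta_j\tfrac{\partial^2 L}{\partial\theta_j\partial\theta_k}=\tfrac{\partial L}{\partial\theta_k}$, hence $\sum_j\theta_j\tfrac{\partial^2 L}{\partial\theta_j\partial\theta_k}=0$. Substituting back, $Q(A_u(u),w)=\tfrac{\partial L}{\partial\theta_k}Q(u_k,w)$ for every $w\in\mathfrak{m}_k$ and every $k$. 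Since the decomposition $\mathfrak{m}=\mathfrak{m}_1+\cdots+\mathfrak{m}_s$ is $Q$-orthogonal and $Q$ is nondegenerate on each $\mathfrak{m}_k$, this pins down $A_u(u)=\sum_{i=1}^s\tfrac{\partial L}{\partial\theta_i}u_i$.

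The "in particular" assertion is then immediate: if $u\in\mathfrak{m}_I$ then $u_i=0$ for $i\notin I$, so the corresponding summands of $A_u(u)$ vanish, and $A_u(u)\in\mathfrak{m}_I$. There is no real obstacle here — the only nontrivial ingredient is the Euler identity for the $1$-homogeneous function $L$, which is precisely the role that the homogeneity condition stated in Section \ref{section-3-3} plays. The slight care needed is that derivatives of $L$ make sense on the relevant boundary strata when some $\theta_i=0$, but smoothness of $L$ on $[0,+\infty)^s\setminus\{(0,\ldots,0)\}$ covers this, and in any case the terms with $u_i=0$ drop out of the final formula.
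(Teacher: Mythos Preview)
Your argument is correct and follows essentially the same route as the paper: test $A_u(u)$ against vectors in each $\mathfrak{m}_k$ via Lemma~\ref{proposition 3.10}, and kill the second-derivative sum using the $0$-homogeneity of $\partial L/\partial\theta_k$ (equivalently, your differentiated Euler identity). The only slip is cosmetic: you cite ``the second formula'' of Lemma~\ref{proposition 3.10} for $j\neq k$ and ``the first formula'' for $j=k$, whereas in the paper's statement these are the first and second displayed formulas respectively; the computations you actually carry out are the right ones.
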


\begin{proof} Let $u_i=\sum_{l=1}^{n_i}u_i^l e^i_l$.
To prove the first statement in Lemma \ref{Proposition 3.8},
we only need to verify $Q(A_u(u),e^j_k)= \tfrac{\partial L}{\partial \theta_j} u_j^k$ for any $1\leq j\leq s$ and $1\leq k\leq n_i$.
By Lemma \ref{proposition 3.10},
\begin{eqnarray}\label{005}
Q(A_u(u),e^j_k)&=&\sum_{i=1}^s\sum_{l=1}^{n_i}
u_i^l Q(A_u(e^i_l), e^j_k) =\tfrac{\partial L}{\partial \theta_j} u_j^k+
\sum_{i=1}^s\sum_{l=1}^{n_i}2\tfrac{\partial^2 L}{\partial\theta_i\partial\theta_j} (u^i_l)^2 u_j^k\nonumber\\
&=& \tfrac{\partial L}{\partial \theta_j} u_j^k+ 2u_j^k\sum_{i=1}^s
\tfrac{\partial^2 L}{\partial\theta_i\partial\theta_j}\alpha_i^2(u_i).
\end{eqnarray}
Since the partial derivatives of $L$ in (\ref{005}) are evaluated at $(\alpha_1^2(u_1),\cdots,\alpha_s^2(u_s))$ and $\tfrac{\partial L}{\partial\theta_k}$ is positive 0-homogeneous, $\sum_{i=1}^s
\tfrac{\partial^2 L}{\partial\theta_i\partial\theta_j}\alpha_i^2(u_i)=0$.
The first statement in Lemma \ref{Proposition 3.8} is proved. The second statement follows immediately.
\end{proof}
\section{Compact homogeneous spaces with two isotropy summands}
\subsection{Some easy observations and useful results}
Throughout this section, we assume that $G/H$ is a connected simply connected homogeneous manifold with a compact connected simple $G$ (then $H$ must be connected as well), and an $\mathrm{Ad}(H)$-invariant $Q$-orthogonal decomposition $\mathfrak{m}=\mathfrak{m}_1+\mathfrak{m}_2$, in which each $\mathfrak{m}_i$ is $H$-irreducible. The classification of all these $G/H$ (in the Lie algebraic level) is given in \cite{DK} (see also Table 1-2 in \cite{CN}), which can
be sorted into two subclasses, either $H$ is maximal in $G$, or we can find a compact connected subgroup $K$ satisfying $H\subsetneq K\subsetneq G$.

Fortunately, we will not essentially use this classification. Instead,
the following two theorems in \cite{CN} are crucial for proving our Theorem \ref{theorem 1.1}.

\begin{theorem}\label{thm-4}
Let $G/H$ be a connected simply connected homogeneous manifold which isotropy representation is the sum of two irreducible summands, and $G$ is a compact connected simple Lie group.
Then $G/H$ admits a
$G$-invariant g.o. Riemannian metric which is not normal  homogeneous, if and only if there exists a compact connected Lie subgroup $K$ such that $H\subset K\subset G$, $G/K$ is symmetric, and
 $(H,K,G)$ belongs to the list in Theorem \ref{theorem 1.1} in the Lie algebraic level.
\end{theorem}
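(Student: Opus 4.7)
The plan is to translate the Riemannian g.o. condition for a two-summand diagonal metric into bracket identities on $\m_1$, $\m_2$ and $\h$, use these to pin down the existence of an intermediate symmetric subgroup $K$, and then match against the Dickinson--Kerr classification of two-summand pairs. With $\m = \m_1 + \m_2$ as in the statement, every $G$-invariant Riemannian metric has the form $\mathrm{g} = \lambda_1 Q|_{\m_1} + \lambda_2 Q|_{\m_2}$ with metric operator $A = \lambda_1 \mathrm{Id}_{\m_1} + \lambda_2 \mathrm{Id}_{\m_2}$, and it is normal iff $\lambda_1 = \lambda_2$. For $u = u_1+u_2$ one computes $[u,Au] = (\lambda_2-\lambda_1)[u_1,u_2]$, and the g.o. criterion of \cite{DA,S} recalled in the introduction requires $Z \in \h$ with $[u+Z,Au] = 0$. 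Since $[\h,\m_i] \subseteq \m_i$, splitting this equation into its $\h$, $\m_1$ and $\m_2$ components yields three relations whose $\h$-part reads $(\lambda_2-\lambda_1)[u_1,u_2]_{\h} = 0$; under the non-normal assumption $\lambda_1 \neq \lambda_2$ this forces $[\m_1,\m_2] \subseteq \m$.

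Next, I would extract an intermediate symmetric subalgebra from the remaining two component relations. Decompose $[\m_1,\m_2] = P_1 \oplus P_2$ with $P_i \subseteq \m_i$; the $\m_1$- and $\m_2$-relations must be solved simultaneously by a single $Z \in \h$ depending on $u_1,u_2$. Combining $\mathrm{Ad}(H)$-irreducibility of each $\m_i$ with a Schur-type argument, one expects this compatibility to force $P_1 = 0$ or $P_2 = 0$; say $P_1 = 0$, so $[\m_1,\m_2] \subseteq \m_2$. By $\mathrm{Ad}(G)$-invariance of $Q$ this gives $[\m_1,\m_1]_{\m_2} = 0$, so $\mathfrak{k} := \h + \m_1$ is a subalgebra of $\g$ corresponding to a closed connected intermediate subgroup $K$ with $H \subsetneq K \subsetneq G$. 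Feeding this back into the $\m_2$-component equation with $u_1 \in \m_1$ and $u_2 \in \m_2$ arbitrary, and once more exploiting irreducibility, I would argue that $[\m_2,\m_2] \subseteq \mathfrak{k}$, i.e.\ $G/K$ is a symmetric pair, with $\m_1 = \mathfrak{k}\ominus\h$ and $\m_2 = \g\ominus\mathfrak{k}$.

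Finally, I would match against the Dickinson--Kerr classification of two-summand compact simply connected $G/H$ with simple $G$ (cf.\ \cite{DK} and Tables~1--2 of \cite{CN}) restricted to triples $(H,K,G)$ admitting such an intermediate $K$ with $G/K$ symmetric and both $\mathfrak{k}\ominus\h$ and $\g\ominus\mathfrak{k}$ still $\mathrm{Ad}(H)$-irreducible. Running through that list under these filters isolates exactly the nine items (1)--(9). For the converse, on any such triple I would solve the $Z$-compatibility $[Z,u_1] = 0$ and $\lambda_2[Z,u_2] = (\lambda_1-\lambda_2)[u_1,u_2]$ explicitly; the symmetry of $G/K$ and the isotropy-irreducibility of $K/H$, together with the additional algebraic structure present in each case (Clifford modules for the $\mathrm{Spin}$ cases, complex structures for the $U(k)$ cases, quaternionic structures for the $\Sp(n)$ cases, and the $E_6$-minuscule structure in case (9)) make this solvable for suitable ratios $\lambda_1/\lambda_2 \neq 1$. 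The main obstacle is the hands-on bracket calculation required both to eliminate the intermediate-$K$ triples with non-symmetric $G/K$ and to exhibit $Z$ in each of the nine surviving cases; this is exactly the case-by-case verification carried out in \cite{CN}, and it relies on the explicit root-system and representation-theoretic data of each exceptional or classical pair.
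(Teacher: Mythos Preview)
The paper does not prove this theorem at all; it is quoted verbatim as Theorem~2 of \cite{CN} (together with the companion Theorem~\ref{thm-5}) and then used as a black box in the proof of Theorem~\ref{theorem 1.1}. So there is no ``paper's own proof'' to compare your proposal against.

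As for the proposal itself: your first reduction, that the $\h$-component of $[u+Z,Au]=0$ forces $[\m_1,\m_2]\subseteq\m$ when $\lambda_1\neq\lambda_2$, is correct and clean. The genuine gap is the next step. You write that ``one expects'' a Schur-type argument to force $P_1=0$ or $P_2=0$ from the simultaneous solvability of the $\m_1$- and $\m_2$-component equations, but no such argument is supplied, and it is not clear that one exists at this level of generality. The g.o.\ condition only guarantees, for each pair $(u_1,u_2)$, the existence of \emph{some} $Z\in\h$ satisfying two linear equations; this is a solvability constraint on the $\mathrm{ad}(\h)$-orbits in $\m_1$ and $\m_2$, not an obvious representation-theoretic obstruction that would kill one of the projections outright. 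Similarly, your derivation that $G/K$ is symmetric (``feeding this back \ldots\ I would argue that $[\m_2,\m_2]\subseteq\mathfrak{k}$'') is asserted rather than argued. In \cite{CN} the route is essentially the reverse of what you sketch: one first passes through the equivalence recorded here as Theorem~\ref{thm-5}, reducing the question to a bracket condition independent of $(\lambda_1,\lambda_2)$, and then checks that condition against the Dickinson--Kerr tables case by case, with the intermediate symmetric $K$ emerging from the classification rather than from an a priori structural argument. Your final paragraph already concedes that the real content is this case-by-case work in \cite{CN}; the earlier paragraphs should not suggest that a clean conceptual shortcut is available when none has been exhibited.
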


\begin{theorem}\label{thm-5}
Keeping all assumptions and notations in this section.
The following statements are equivalent:
\begin{enumerate}
\item the metric $\lambda Q|_{\mathfrak{m}_1\times\mathfrak{m}_1}+\mu Q|_{\mathfrak{m}_2\times\mathfrak{m}_2}$ is g.o. for some $\lambda\neq\mu$;
\item the metric $\lambda Q|_{\mathfrak{m}_1\times\mathfrak{m}_1}+\mu Q|_{\mathfrak{m}_2\times\mathfrak{m}_2}$ is g.o. for any $\lambda$, $\mu$;
\item for any $X\in\mathfrak{m}_1$ and $Y\in\mathfrak{m}_2$, there is a unique $Z_X\in\widetilde{C}_\mathfrak{h}(X+Y)\cap C_\mathfrak{h}(X)$ and a unique
    $Z_Y\in\widetilde{C}_\mathfrak{h}(X+Y)\cap C_\mathfrak{h}(Y)$ such that
    $[X,Y]=[Z_Y,X]+[Z_X,Y]$.
\end{enumerate}
\end{theorem}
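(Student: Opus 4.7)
My plan is first to reduce the geodesic orbit condition for the metric with operator $A=\lambda\,\mathrm{id}_{\mathfrak{m}_1}\oplus\mu\,\mathrm{id}_{\mathfrak{m}_2}$ to a concrete bracket system, and then to establish the three implications (2)$\Rightarrow$(1)$\Rightarrow$(3)$\Rightarrow$(2). I would begin with two structural observations. By $\mathrm{Ad}(G)$-invariance of $Q$ and $Q(\mathfrak{m}_1,\mathfrak{m}_2)=0$, for $X\in\mathfrak{m}_1$, $Y\in\mathfrak{m}_2$ and any $W\in\mathfrak{h}$ we have $Q([X,Y],W)=-Q(Y,[X,W])=0$, so $[X,Y]_\mathfrak{h}=0$ and $[X,Y]=[X,Y]_{\mathfrak{m}_1}+[X,Y]_{\mathfrak{m}_2}$. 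Further, $[\mathfrak{h},\mathfrak{m}_i]\subset\mathfrak{m}_i$ together with $\mathfrak{m}_1\cap\mathfrak{m}_2=\{0\}$ forces any $Z\in\mathfrak{h}$ annihilating $X+Y$ to annihilate both $X$ and $Y$, which gives the key identity $C_\mathfrak{h}(X+Y)=C_\mathfrak{h}(X)\cap C_\mathfrak{h}(Y)$. With $u=X+Y$ and $Au=\lambda X+\mu Y$, a direct computation yields $[u,Au]=(\mu-\lambda)[X,Y]$, so the g.o.\ condition $[u+Z,Au]=0$ from Lemma \ref{lemma3.4} becomes, after projecting to $\mathfrak{m}_1$ and $\mathfrak{m}_2$, the system
\[
\lambda[Z,X]=(\lambda-\mu)[X,Y]_{\mathfrak{m}_1},\qquad \mu[Z,Y]=(\lambda-\mu)[X,Y]_{\mathfrak{m}_2},\qquad \text{for some }Z\in\mathfrak{h}.
\]

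The implication (2)$\Rightarrow$(1) is trivial. For (3)$\Rightarrow$(2), if $Z_X,Z_Y$ are as in (3), then $Z=\tfrac{\lambda-\mu}{\lambda}Z_Y+\tfrac{\lambda-\mu}{\mu}Z_X$ solves the system above, since $[Z_X,X]=0$ and $[Z_Y,Y]=0$ eliminate the cross terms; for $\lambda=\mu$ the condition holds trivially with $Z=0$, and the degenerate cases $X=0$ or $Y=0$ are immediate.

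The essential direction is (1)$\Rightarrow$(3). Fix nonzero $X\in\mathfrak{m}_1,Y\in\mathfrak{m}_2$ and a g.o.\ metric with $\lambda_0\neq\mu_0$. For each $t\in\mathbb{R}$, applying the displayed system to $u_t=tX+Y$ yields $Z(t)\in\mathfrak{h}$ with $[Z(t),X]=\tfrac{\lambda_0-\mu_0}{\lambda_0}[X,Y]_{\mathfrak{m}_1}$ independent of $t$ and $[Z(t),Y]=\tfrac{t(\lambda_0-\mu_0)}{\mu_0}[X,Y]_{\mathfrak{m}_2}$ linear in $t$. Thus $Z(2)-Z(1)\in C_\mathfrak{h}(X)$, and a suitable rescaling gives $Z_X^{\ast}\in C_\mathfrak{h}(X)$ with $[Z_X^{\ast},Y]=[X,Y]_{\mathfrak{m}_2}$; the analogous construction using $u_t=X+tY$ produces $Z_Y^{\ast}\in C_\mathfrak{h}(Y)$ with $[Z_Y^{\ast},X]=[X,Y]_{\mathfrak{m}_1}$. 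To land in the prescribed intersection I use that $C_\mathfrak{h}(X+Y)=C_\mathfrak{h}(X)\cap C_\mathfrak{h}(Y)$ is contained in $C_\mathfrak{h}(X)$, so the $Q$-orthogonal splitting $\mathfrak{h}=C_\mathfrak{h}(X+Y)\oplus\widetilde{C}_\mathfrak{h}(X+Y)$ restricts to
\[
C_\mathfrak{h}(X)=C_\mathfrak{h}(X+Y)\oplus\bigl(\widetilde{C}_\mathfrak{h}(X+Y)\cap C_\mathfrak{h}(X)\bigr).
\]
Projecting $Z_X^{\ast}$ onto the second summand defines $Z_X$; the removed piece lies in $C_\mathfrak{h}(X+Y)\subset C_\mathfrak{h}(Y)$, hence $[Z_X,Y]=[Z_X^{\ast},Y]=[X,Y]_{\mathfrak{m}_2}$ is preserved. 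Uniqueness follows because the difference of any two valid $Z_X$ lies in $C_\mathfrak{h}(X)\cap C_\mathfrak{h}(Y)\cap\widetilde{C}_\mathfrak{h}(X+Y)=C_\mathfrak{h}(X+Y)\cap\widetilde{C}_\mathfrak{h}(X+Y)=\{0\}$. The same argument gives the unique $Z_Y$, and $[X,Y]=[Z_Y,X]+[Z_X,Y]$ then follows by reassembling the bracket equations and using $[X,Y]_\mathfrak{h}=0$.

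The principal obstacle is the decoupling in (1)$\Rightarrow$(3): a single g.o.\ relation at $u=X+Y$ couples the unknowns $[Z,X]$ and $[Z,Y]$, so to separate them I must exploit the g.o.\ property along the entire line $u_t=tX+Y$ and take differences. Once this is accomplished, the coincidence $C_\mathfrak{h}(X+Y)=C_\mathfrak{h}(X)\cap C_\mathfrak{h}(Y)$, which is particular to the two-summand setting, yields the canonical representative of $Z_X$ in $\widetilde{C}_\mathfrak{h}(X+Y)\cap C_\mathfrak{h}(X)$ and its uniqueness.
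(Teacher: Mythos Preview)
Your argument has a genuine gap in the step (1)$\Rightarrow$(3). You write ``the $Q$-orthogonal splitting $\mathfrak{h}=C_\mathfrak{h}(X+Y)\oplus\widetilde{C}_\mathfrak{h}(X+Y)$'', but this is not the definition used in the paper: $\widetilde{C}_\mathfrak{h}(U)$ is the $Q$-orthogonal complement of $C_\mathfrak{h}(U)$ \emph{inside the normalizer} $N_\mathfrak{h}(C_\mathfrak{h}(U))$, not inside all of $\mathfrak{h}$. Consequently your restricted decomposition $C_\mathfrak{h}(X)=C_\mathfrak{h}(X+Y)\oplus\bigl(\widetilde{C}_\mathfrak{h}(X+Y)\cap C_\mathfrak{h}(X)\bigr)$ is not available in general; it would require $C_\mathfrak{h}(X)\subset N_\mathfrak{h}(C_\mathfrak{h}(X+Y))$, which need not hold. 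What you actually need before projecting is to verify that your specific element $Z_X^{\ast}$ normalizes $C_\mathfrak{h}(X+Y)$. This \emph{does} hold, but it requires a Jacobi-identity calculation using the extra information $[Z_X^{\ast},Y]=[X,Y]_{\mathfrak{m}_2}$: for $v\in C_\mathfrak{h}(X+Y)$ one checks $[[Z_X^{\ast},v],X]=0$ directly and $[[Z_X^{\ast},v],Y]=-[v,[X,Y]_{\mathfrak{m}_2}]=0$, the latter because $[v,[X,Y]]=0$ and $[\mathfrak{h},\mathfrak{m}_i]\subset\mathfrak{m}_i$.

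Note that the paper does not itself prove this theorem (it is quoted from \cite{CN}), but its proof of the Finsler analogue, Lemma~\ref{proposition 5.7}, treats exactly this point as the nontrivial step (Claim~A): one first shows the element produced by the g.o.\ condition lies in $N_\mathfrak{h}(C_\mathfrak{h}(u_1+u_2))$ via Jacobi, and only then replaces it by its component in $\widetilde{C}_\mathfrak{h}(u_1+u_2)$. Your decoupling device (comparing $Z(t)$ at $t=1,2$) is a legitimate alternative to the paper's trick of pairing $(\lambda u_1,u_2)$ with $(\lambda u_1,-u_2)$, and your treatments of (3)$\Rightarrow$(2) and of uniqueness are correct; the only missing ingredient is the normalizer verification above.
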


For any $U\in\mathfrak{m}$, $C_\mathfrak{h}(U)=\{u| u\in\mathfrak{h},[u,X]=0\}$ is
the centralizer of $U\in\mathfrak{m}$ in $\mathfrak{h}$.
By $\widetilde{C}_\mathfrak{h}(U)$, we denote the $Q$-orthogonal complement of $C_\mathfrak{h}(U)$
in the normalizer $N_\mathfrak{h}(C_\mathfrak{h}(U))=\{u| u\in\mathfrak{h},[u, C_\mathfrak{h}(U)]\subset C_\mathfrak{h}(U)\}$.

We will also use Lemma 4 in \cite{CN} (see also Lemma 7 in \cite{Niko}), i.e.,

\begin{lemma}\label{lemma 4.1}
In the above assumptions, we have $[\mathfrak{m}_1, \mathfrak{m}_2]\neq 0$.
\end{lemma}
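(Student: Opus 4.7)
My plan is to argue by contradiction: assume $[\mathfrak{m}_1,\mathfrak{m}_2]=0$ and extract from this hypothesis a proper nonzero ideal of $\mathfrak{g}$, which will contradict the simplicity of $G$.

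The first key step is to use the $\mathrm{ad}$-invariance of $Q$ to pin down $[\mathfrak{m}_1,\mathfrak{m}_1]$. For any $X,Y\in\mathfrak{m}_1$ and $Z\in\mathfrak{m}_2$, one has $Q([X,Y],Z)=Q(X,[Y,Z])=0$ since $[Y,Z]\in[\mathfrak{m}_1,\mathfrak{m}_2]=0$, so $[\mathfrak{m}_1,\mathfrak{m}_1]\subset\mathfrak{h}+\mathfrak{m}_1$; in particular $\mathfrak{k}_1:=\mathfrak{h}+\mathfrak{m}_1$ is a subalgebra. Then I would introduce $I:=\mathfrak{m}_1+[\mathfrak{m}_1,\mathfrak{m}_1]\subset\mathfrak{k}_1$ and verify that $I$ is actually an ideal of $\mathfrak{g}$. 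Bracketing against $\mathfrak{m}_1$ is handled by $[\mathfrak{h},\mathfrak{m}_1]\subset\mathfrak{m}_1$, $[\mathfrak{m}_1,\mathfrak{m}_1]\subset I$, and the standing assumption $[\mathfrak{m}_2,\mathfrak{m}_1]=0$. Bracketing $[\mathfrak{m}_1,\mathfrak{m}_1]$ against $\mathfrak{g}$ reduces via the Jacobi identity to terms of the form $[\mathfrak{h},\mathfrak{m}_1]$ or $[\mathfrak{m}_1,\mathfrak{m}_1]$, and the potentially bothersome term $[\mathfrak{m}_2,[\mathfrak{m}_1,\mathfrak{m}_1]]$ vanishes outright because both $[[\mathfrak{m}_2,\mathfrak{m}_1],\mathfrak{m}_1]$ and $[\mathfrak{m}_1,[\mathfrak{m}_2,\mathfrak{m}_1]]$ are zero.

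Since $\mathfrak{m}_1\neq 0$, the ideal $I$ is nonzero, while $I\subset\mathfrak{h}+\mathfrak{m}_1$ is $Q$-orthogonal to the nonzero $\mathfrak{m}_2$, so $I\subsetneq\mathfrak{g}$; simplicity of $\mathfrak{g}$ then supplies the contradiction. The main conceptual point—which is essentially the only non-bookkeeping step—is recognizing that the hypothesis $[\mathfrak{m}_1,\mathfrak{m}_2]=0$ together with the $\mathrm{ad}$-invariance of $Q$ already forces $[\mathfrak{m}_1,\mathfrak{m}_1]\subset\mathfrak{h}+\mathfrak{m}_1$; everything after that is a routine Jacobi computation identifying $\mathfrak{m}_1+[\mathfrak{m}_1,\mathfrak{m}_1]$ as the ideal generated by $\mathfrak{m}_1$, and the simplicity hypothesis closes the argument. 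Note that the irreducibility of the $\mathfrak{m}_i$ plays no role here; only $\mathfrak{m}_1,\mathfrak{m}_2\neq 0$ and the simplicity of $\mathfrak{g}$ are used.
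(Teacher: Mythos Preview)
Your argument is correct. The contradiction hypothesis $[\mathfrak{m}_1,\mathfrak{m}_2]=0$ together with the $\mathrm{ad}$-invariance of $Q$ indeed forces $[\mathfrak{m}_1,\mathfrak{m}_1]\subset\mathfrak{h}+\mathfrak{m}_1$, and your Jacobi verifications that $I=\mathfrak{m}_1+[\mathfrak{m}_1,\mathfrak{m}_1]$ is a nonzero proper ideal of $\mathfrak{g}$ all check out; simplicity then finishes the job.

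As for comparison: the paper does not supply its own proof of this lemma at all. It simply quotes the result as Lemma~4 of Chen--Nikonorov \cite{CN} (see also Lemma~7 of \cite{Niko}) and moves on. So your write-up is strictly more informative than what the paper provides, and has the further virtue of making explicit that irreducibility of the $\mathfrak{m}_i$ is unnecessary---only $\mathfrak{m}_1,\mathfrak{m}_2\neq 0$ and the simplicity of $\mathfrak{g}$ are used.
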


\subsection{Descriptions for g.o. and naturally reductive properties}
Let $G/H$ be the homogeneous manifold described in the previous subsection, and $F=\sqrt{L(\alpha_1^2,\alpha_2^2)}$ a standard homogeneous $(\alpha_1,\alpha_2)$-metric on $G/H$ with respect to the given decomposition $\mathfrak{m}=\mathfrak{m}_1+\mathfrak{m}_2$.

\begin{lemma}\label{proposition 5.2}
$F=\sqrt{(\alpha_1^2,\alpha_2^2)}$ is g.o. on $G/H$ if and only if, for any $u=u_1+u_2$ with $u_1\in \mathfrak{m}_1\backslash\{0\}$ and $u_2\in \mathfrak{m}_2\backslash\{0\}$,
there is $u'\in \mathfrak{h}$ satisfying
\begin{equation}\label{equation 5.5}
  (\frac{\partial L}{\partial \theta_1}-\frac{\partial L}{\partial \theta_2})[u_1,u_2]=
  \frac{\partial L}{\partial \theta_1}[u',u_1]+ \frac{\partial L}{\partial \theta_2}[u',u_2].
\end{equation}
\end{lemma}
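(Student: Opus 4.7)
The plan is to combine the algebraic g.o. criterion of Lemma \ref{lemma3.4} with the explicit computation of $A_u(u)$ in Lemma \ref{Proposition 3.8}. By Lemma \ref{lemma3.4}, $F$ is g.o. on $G/H$ if and only if, for each $u\in\mathfrak{m}\setminus\{0\}$, there exists $u'\in\mathfrak{h}$ such that $[u'+u,A_u(u)]=0$. The task is to rewrite this single condition in the two-summand situation as the identity (\ref{equation 5.5}).

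I would first dispose of the degenerate cases $u=u_1\in\mathfrak{m}_1\setminus\{0\}$ and $u=u_2\in\mathfrak{m}_2\setminus\{0\}$. In either case Lemma \ref{Proposition 3.8} gives $A_u(u)=\tfrac{\partial L}{\partial\theta_i}u_i$, which is a scalar multiple of $u$ itself, so choosing $u'=0$ already yields $[u+u',A_u(u)]=0$. Hence these values of $u$ impose no constraint and it is enough to examine $u=u_1+u_2$ with both summands nonzero.

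For such a generic $u$, Lemma \ref{Proposition 3.8} gives
\[
A_u(u)=\tfrac{\partial L}{\partial\theta_1}u_1+\tfrac{\partial L}{\partial\theta_2}u_2,
\]
and I expand $[u'+u_1+u_2,A_u(u)]$ by bilinearity. Using $[u_i,u_i]=0$ and $[u_2,u_1]=-[u_1,u_2]$, the contribution from $u_1+u_2$ collapses to $(\tfrac{\partial L}{\partial\theta_2}-\tfrac{\partial L}{\partial\theta_1})[u_1,u_2]$, while the contribution from $u'$ is $\tfrac{\partial L}{\partial\theta_1}[u',u_1]+\tfrac{\partial L}{\partial\theta_2}[u',u_2]$. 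Setting the sum to zero and transposing the $[u_1,u_2]$-term yields exactly (\ref{equation 5.5}); conversely, reading the argument in reverse shows that (\ref{equation 5.5}) for all generic $u$ together with $u'=0$ for the degenerate $u$ recovers the criterion of Lemma \ref{lemma3.4}.

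There is essentially no obstacle here: the lemma is a direct algebraic translation of Lemma \ref{lemma3.4} once one has the diagonal expression for $A_u(u)$ in Lemma \ref{Proposition 3.8}. The only minor subtlety is checking that restricting to $u$ with both components nonzero loses no information, which the degenerate-case step handles.
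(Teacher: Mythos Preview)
Your proposal is correct and follows essentially the same approach as the paper: both invoke Lemma \ref{lemma3.4}, plug in the formula $A_u(u)=\tfrac{\partial L}{\partial\theta_1}u_1+\tfrac{\partial L}{\partial\theta_2}u_2$ from Lemma \ref{Proposition 3.8}, expand the bracket, and observe that the degenerate cases $u\in\mathfrak{m}_1\cup\mathfrak{m}_2$ are handled by $u'=0$. The only cosmetic difference is that the paper treats the degenerate case at the end rather than at the start.
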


\begin{proof}

By Lemma \ref{lemma3.4}, $F$ is g.o. on $G/H$ if and only if for any nonzero vector $u=u_1+u_2$ with
$u_1\in \mathfrak{m}_1$ and $u_2\in \mathfrak{m}_2$, there exists $u'\in\mathfrak{h}$ such that
$[u'+u,A_u(u)]=0$. By Lemma \ref{Proposition 3.8},
\begin{eqnarray*}
[u'+u,A_u(u)]&=&[u'+u_1+u_2,\tfrac{\partial L}{\partial\theta_1}u_1+\tfrac{\partial L}{\partial \theta_2}u_2]\\
&=&\tfrac{\partial L}{\partial\theta_1}[u',u_1]+\tfrac{\partial L}{\partial\theta_2}[u',u_2]
+(\tfrac{\partial L}{\partial \theta_2}-\tfrac{\partial L}{\partial \theta_1})[u_1,u_2].
\end{eqnarray*}
So $[u'+u,A_u(u)]=0$ is equivalent to (\ref{equation 5.5}). Finally, we notice that (\ref{equation 5.5})
is always valid when $u\in\mathfrak{m}_1\cup\mathfrak{m}_2$, because we can choose $u'=0$ in this case. The
proof of Lemma \ref{proposition 5.2} is finished.
\end{proof}

As in \cite{ZX}, we alternatively present $F=\sqrt{L(\alpha_1^2,\alpha_2^2)}$ as $F=\alpha\varphi(\theta)$, Where $\alpha^2(u)=\alpha_1^2(u_1)+\alpha_2^2(u_2)=Q(u,u)$, $L(1-\theta^2,\theta^2)=\varphi^2(\theta)$, and $\theta=\alpha_2(u_2)/\alpha(u)$. Then, we have
\begin{equation*}
  L(\theta_1,\theta_2)=(\theta_1+\theta_2)\varphi^2(\theta),\
  \theta=\sqrt{\frac{\theta_2}{\theta_1+\theta_2}},
\end{equation*}
and
\begin{equation}\label{equation 5.6}
  \frac{\partial L}{\partial \theta_1}=\varphi^2(\theta)-\theta\varphi(\theta)\varphi'(\theta),\ \
  \frac{\partial L}{\partial \theta_2}=\varphi^2(\theta)-(\theta-\frac{1}{\theta})\varphi(\theta)\varphi'(\theta).
\end{equation}
Hence Lemma \ref{proposition 5.2} can be translated to
\begin{lemma}\label{lemma-7}
Let $(G/H,F)$ be a standard homogeneous $(\alpha_1, \alpha_2)$-space with respect to the decomposition $\mathfrak{m}=\mathfrak{m}_1+\mathfrak{m}_2$.
Then $F=\alpha\varphi(\alpha_2/\alpha)$ is $G$-g.o. if and only if for any $u_1\in\mathfrak{m}_1\backslash\{0\}$ and $u_2\in\mathfrak{m}_2\backslash\{0\}$,
there exists $u'\in\mathfrak{h}$, such that
\begin{equation}\label{equation 5.7}
  -\frac{\varphi'(\theta)}{\theta}[u_1,u_2]=
  (\varphi(\theta)-\theta\varphi'(\theta))[u',u_1]+
  (\varphi(\theta)-(\theta-\frac{1}{\theta})\varphi'(\theta))[u',u_2],
\end{equation}
where $\theta=\alpha_2(u_2)/\alpha(u)\in(0,1)$.
\end{lemma}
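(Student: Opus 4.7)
The plan is to derive this lemma as a direct algebraic reformulation of Lemma \ref{proposition 5.2} under the reparameterization $F=\alpha\varphi(\theta)$, using the explicit expressions for $\partial L/\partial\theta_i$ already recorded in (\ref{equation 5.6}). First I would invoke Lemma \ref{proposition 5.2} to reduce the g.o. property to the existence, for each pair $u_1\in\mathfrak{m}_1\setminus\{0\}$ and $u_2\in\mathfrak{m}_2\setminus\{0\}$, of an element $u'\in\mathfrak{h}$ satisfying (\ref{equation 5.5}). The remaining task is then purely to rewrite (\ref{equation 5.5}) in the variable $\theta$.

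The core step is the one-line computation
\[
\frac{\partial L}{\partial\theta_1}-\frac{\partial L}{\partial\theta_2}
= -\theta\,\varphi(\theta)\varphi'(\theta)+\Big(\theta-\tfrac{1}{\theta}\Big)\varphi(\theta)\varphi'(\theta)
= -\frac{\varphi(\theta)\varphi'(\theta)}{\theta},
\]
together with the observation that $\partial L/\partial\theta_i=\varphi(\theta)\bigl(\varphi(\theta)-c_i(\theta)\varphi'(\theta)\bigr)$, where $c_1(\theta)=\theta$ and $c_2(\theta)=\theta-1/\theta$. Substituting into (\ref{equation 5.5}), both sides acquire a common overall factor $\varphi(\theta)$. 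Since $F$ is a Minkowski norm, $\varphi$ is strictly positive on $[0,1]$, so I can cancel this factor, and what remains is exactly (\ref{equation 5.7}).

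Finally, I would remark that the quantification $\theta\in(0,1)$ in the statement corresponds precisely to the restriction $u_1\neq 0$ and $u_2\neq 0$ in Lemma \ref{proposition 5.2}, and that the boundary cases $u\in\mathfrak{m}_1$ or $u\in\mathfrak{m}_2$ hold trivially by taking $u'=0$, as already observed there. The only sanity check required is that the formulas in (\ref{equation 5.6}) are correctly derived from $\theta=\sqrt{\theta_2/(\theta_1+\theta_2)}$ via $\partial\theta/\partial\theta_i$; since (\ref{equation 5.6}) is treated as given, the lemma then follows with no further geometric input. I do not anticipate any essential obstacle here: the argument is purely algebraic manipulation, and the direction of equivalence is preserved at every step because $\varphi(\theta)>0$.
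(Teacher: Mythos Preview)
Your proposal is correct and matches the paper's approach exactly: the paper also presents Lemma~\ref{lemma-7} as a direct translation of Lemma~\ref{proposition 5.2} via the reparameterization $F=\alpha\varphi(\theta)$ and the formulas~(\ref{equation 5.6}), with no separate proof given. Your write-up is in fact more detailed than the paper's, which simply states ``Hence Lemma~\ref{proposition 5.2} can be translated to''; the factoring out and cancellation of $\varphi(\theta)>0$ that you make explicit is precisely the step the paper leaves to the reader.
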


\begin{remark} \label{remark-1}
Lemma $\mathrm{2.4}$ in \cite{ZX} indicates that
the coefficients $\varphi(\theta)-\theta\varphi'(\theta)$ and $\varphi(\theta)-(\theta-\tfrac{1}{\theta})\varphi'(\theta)$ in the right side of $\mathrm{(\ref{equation 5.7})}$
are always positive.
\end{remark}

When $u'=0$ is always taken, Lemma \ref{lemma 3.6} and similar calculation provide

\begin{lemma}\label{lemma-8}
$F=\alpha\varphi(\alpha_2/\alpha)$ is naturally reductive on $G/H$ if and only if
it is a normal homogeneous Riemannian metric.
\end{lemma}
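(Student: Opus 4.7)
The plan is to translate the naturally reductive condition from Lemma \ref{lemma 3.6} into a condition on the generating function $\varphi$, and show that it forces $\varphi$ to be constant, which gives a scalar multiple of the bi-invariant metric induced by $Q$.

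First I would fix a nonzero $u = u_1 + u_2 \in \mathfrak{m}_1 + \mathfrak{m}_2$ and use Lemma \ref{Proposition 3.8} to write $A_u(u) = \tfrac{\partial L}{\partial \theta_1}u_1 + \tfrac{\partial L}{\partial \theta_2}u_2$. Expanding the bracket and using $[u_i,u_i] = [u_1,u_2] + [u_2,u_1] = 0$ in the obvious pieces yields
\begin{equation*}
[u, A_u(u)] = \Bigl(\tfrac{\partial L}{\partial \theta_2} - \tfrac{\partial L}{\partial \theta_1}\Bigr)[u_1, u_2].
\end{equation*}
Substituting the two formulas from (\ref{equation 5.6}) collapses the coefficient to $\tfrac{1}{\theta}\varphi(\theta)\varphi'(\theta)$, so by Lemma \ref{lemma 3.6} the natural reductiveness of $F$ is equivalent to
\begin{equation*}
\tfrac{\varphi(\theta)\varphi'(\theta)}{\theta}\,[u_1,u_2] = 0 \quad \text{for every } u_1 \in \mathfrak{m}_1\setminus\{0\},\ u_2 \in \mathfrak{m}_2\setminus\{0\}.
\end{equation*}

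Next I would invoke Lemma \ref{lemma 4.1} to pick $u_1 \in \mathfrak{m}_1$, $u_2 \in \mathfrak{m}_2$ with $[u_1,u_2]\neq 0$. By rescaling $u_1$ and $u_2$ independently (which preserves nonvanishing of the bracket by bilinearity) I can realize any prescribed value of $\theta = \alpha_2(u_2)/\alpha(u) \in (0,1)$. Since $\varphi$ is positive everywhere, the identity above forces $\varphi'(\theta) \equiv 0$ on $(0,1)$, and continuity extends this to $[0,1]$. Hence $\varphi$ is a positive constant $c$, and $F = c\,\alpha = c\sqrt{-B(\cdot,\cdot)}|_\mathfrak{m}$, i.e.\ $F$ is (a rescaling of) the normal homogeneous Riemannian metric associated with $Q = -B$. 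Conversely, if $\varphi \equiv c$ then $\varphi' \equiv 0$ and the coefficient $\tfrac{\varphi\varphi'}{\theta}$ vanishes identically, so $[u, A_u(u)] = 0$ and Lemma \ref{lemma 3.6} gives natural reductiveness.

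There is no real obstacle here beyond executing the derivative computation cleanly; the only point that requires care is verifying that as $u_1, u_2$ range over nonzero vectors (with $[u_1,u_2]\neq 0$ guaranteed by Lemma \ref{lemma 4.1}), the parameter $\theta$ sweeps out all of $(0,1)$, so that $\varphi'$ is forced to vanish pointwise on the whole interval rather than only on a single value. This is immediate from the scaling freedom, and in fact mirrors the style of argument already used for g.o.\ metrics in Lemma \ref{proposition 5.2} and Lemma \ref{lemma-7}, which is why the author is content to omit the proof.
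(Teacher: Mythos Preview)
Your proof is correct and follows essentially the same route as the paper: compute $[u,A_u(u)]=(\tfrac{\partial L}{\partial\theta_2}-\tfrac{\partial L}{\partial\theta_1})[u_1,u_2]$ via Lemma~\ref{Proposition 3.8}, convert the coefficient using (\ref{equation 5.6}), invoke Lemma~\ref{lemma 4.1} to produce a nonvanishing bracket, and sweep $\theta$ over $(0,1)$ by scaling to force $\varphi'\equiv 0$. The only cosmetic difference is that the paper divides through by the positive factor $\varphi(\theta)$ (mirroring Lemma~\ref{lemma-7}) and scales only $u_2$, whereas you retain the factor and allow both rescalings; neither affects the argument.
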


\begin{proof}
Assume that $F$ is naturally reductive on $G/H$. Lemma \ref{lemma 4.1} provides $v_1\in\mathfrak{m}_1$
and $v_2\in\mathfrak{m}_2$ such that $[v_1,v_2]\neq0$. Take $u_1=v_1$ and $u_2=\lambda v_2$
with $\lambda>0$, then $\theta=\alpha_2(u_2)/\alpha(u)$ can exhaust all numbers in $(0,1)$.
By Lemma \ref{lemma 3.6}, Lemma \ref{Proposition 3.8}, and similar calculations as for Lemma \ref{lemma-7}, we get $-\frac{\varphi'(\theta)}{\theta}[u_1,u_2]=0$. Since $[u_1,u_2]=\lambda[v_1,v_2]\neq0$, $\varphi(\theta)$ must be a constant function, i.e., $F$ is a
normal homogeneous Riemannian metric.

To summarize, above argument proves one side of Lemma \ref{lemma-8}. The other side is obvious.
\end{proof}

\subsection{Proof of Theorem \ref{theorem 1.1}}

\begin{lemma}\label{proposition 5.7}
Keeping all assumptions and notations for $G/H$,
then the following conditions are equivalent:
\begin{enumerate}
\item[(1)] any standard homogeneous $(\alpha_1,\alpha_2)$-metric $F=\alpha\varphi(\theta)$ on $G/H$, with respect to the given decomposition $\mathfrak{m}=\mathfrak{m}_1+\mathfrak{m}_2$, is g.o.;
  \item[(2)] there exists a standard homogeneous $(\alpha_1,\alpha_2)$-metric $F=\alpha\varphi(\theta)$ on $G/H$, with respect to the given decomposition $\mathfrak{m}=\mathfrak{m}_1+\mathfrak{m}_2$, which is g.o. but not naturally reductive;
  \item[(3)] for any $u_1\in \mathfrak{m}_1$ and $u_2\in \mathfrak{m}_2$, there exist
  $Z_{u_1}\in \widetilde{C}_{\mathfrak{h}}(u_1+u_2)\cap C_{\mathfrak{h}}(u_1)$ and  $Z_{u_2}\in \widetilde{C}_{\mathfrak{h}}(u_1+u_2)\cap C_{\mathfrak{h}}(u_2)$ such that
  $[u_1,u_2]=[Z_{u_2},u_1]+[Z_{u_1},u_2]$.
   \end{enumerate}
Moreover, $Z_{u_1}$ and $Z_{u_2}$ in $\mathrm{(3)}$ are unique.
\end{lemma}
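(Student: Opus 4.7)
The plan is to prove the cycle (3)$\Rightarrow$(1)$\Rightarrow$(2)$\Rightarrow$(3), followed by the uniqueness of $Z_{u_1},Z_{u_2}$.

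For (3)$\Rightarrow$(1), I fix any standard $(\alpha_1,\alpha_2)$-metric $F=\alpha\varphi(\theta)$ and any $u=u_1+u_2$ with both components nonzero. Setting $a=\varphi-\theta\varphi'$ and $b=\varphi-(\theta-1/\theta)\varphi'$ (both positive by Remark \ref{remark-1}, with $a-b=-\varphi'/\theta$), the ansatz $u'=\tfrac{a-b}{b}Z_{u_1}+\tfrac{a-b}{a}Z_{u_2}$ (using $Z_{u_1},Z_{u_2}$ from (3)) together with $[Z_{u_i},u_i]=0$ yields $a[u',u_1]+b[u',u_2]=(a-b)\bigl([Z_{u_2},u_1]+[Z_{u_1},u_2]\bigr)=(a-b)[u_1,u_2]$, which is precisely equation (\ref{equation 5.7}). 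By Lemma \ref{lemma-7}, $F$ is g.o.\ (the edge case $u\in\mathfrak{m}_1\cup\mathfrak{m}_2$ is handled by $u'=0$). The implication (1)$\Rightarrow$(2) is immediate: the Riemannian metric $F^2=\alpha_1^2+2\alpha_2^2$ corresponds to $\varphi^2=1+\theta^2$, which is non-constant, hence not naturally reductive by Lemma \ref{lemma-8}, while (1) supplies the g.o.\ property.

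The main direction is (2)$\Rightarrow$(3). Given $F$ g.o.\ and non-naturally reductive, Lemma \ref{lemma-8} forces $\varphi$ non-constant, so $\varphi'(\theta)\neq 0$ on an open subset of $(0,1)$. Fix $u_1\in\mathfrak{m}_1,u_2\in\mathfrak{m}_2$ nonzero, and for each $t>0$ apply Lemma \ref{lemma-7} to the pair $(u_1,tu_2)$ to obtain $u'_t\in\mathfrak{h}$ satisfying (\ref{equation 5.7}) at $\theta=\theta_t$. Taking the $\mathfrak{h}$-component of (\ref{equation 5.7}) and using $[\mathfrak{h},\mathfrak{m}_i]\subseteq\mathfrak{m}_i$ forces $[u_1,u_2]_\mathfrak{h}=0$; projecting onto $\mathfrak{m}_1,\mathfrak{m}_2$ gives $[u'_t,u_1]=p(t)[u_1,u_2]_{\mathfrak{m}_1}$ and $[u'_t,u_2]=q(t)[u_1,u_2]_{\mathfrak{m}_2}$ with explicit $p(t)=(a_t-b_t)t/a_t$, $q(t)=(a_t-b_t)/b_t$. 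Since $p(t)/q(t)=tb_t/a_t$ is non-constant in $t$, I can pick $t_1\neq t_2$ with $\varphi'(\theta_{t_i})\neq 0$ and the determinant $p(t_1)q(t_2)-p(t_2)q(t_1)\neq 0$; suitable real linear combinations of $u'_{t_1},u'_{t_2}$ then produce $Z_{u_2}\in C_\mathfrak{h}(u_2)$ with $[Z_{u_2},u_1]=[u_1,u_2]_{\mathfrak{m}_1}$, and $Z_{u_1}\in C_\mathfrak{h}(u_1)$ with $[Z_{u_1},u_2]=[u_1,u_2]_{\mathfrak{m}_2}$; summing recovers the bracket identity in (3).

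The step I expect to be the main obstacle is the final structural refinement: placing $Z_{u_1},Z_{u_2}$ inside $\widetilde{C}_\mathfrak{h}(u_1+u_2)$. After subtracting the $Q$-projection onto $C_\mathfrak{h}(u_1+u_2)=C_\mathfrak{h}(u_1)\cap C_\mathfrak{h}(u_2)$ (which preserves every bracket with $u_1$ or $u_2$ and achieves $Q$-orthogonality), it remains to verify $Z_{u_2}\in N_\mathfrak{h}(C_\mathfrak{h}(u_1+u_2))$, and similarly for $Z_{u_1}$. For $W\in C_\mathfrak{h}(u_1+u_2)$, Jacobi gives $[[Z_{u_2},W],u_1]=-[W,[u_1,u_2]_{\mathfrak{m}_1}]$; a second application of Jacobi to $0=[W,[u_1,u_2]]$ followed by projection to $\mathfrak{m}_1$ yields $[W,[u_1,u_2]_{\mathfrak{m}_1}]=0$, and $[[Z_{u_2},W],u_2]=-[W,[Z_{u_2},u_2]]=0$, so $[Z_{u_2},W]\in C_\mathfrak{h}(u_1+u_2)$ as required. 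For uniqueness, if $(Z_{u_1},Z_{u_2})$ and $(Z'_{u_1},Z'_{u_2})$ both satisfy (3), then $W:=Z_{u_2}-Z'_{u_2}\in\widetilde{C}_\mathfrak{h}(u_1+u_2)\cap C_\mathfrak{h}(u_2)$ and $[W,u_1]+[Z_{u_1}-Z'_{u_1},u_2]=0$; projecting onto $\mathfrak{m}_1$ gives $[W,u_1]=0$, hence $W\in C_\mathfrak{h}(u_1+u_2)\cap\widetilde{C}_\mathfrak{h}(u_1+u_2)=0$, and symmetrically $Z_{u_1}=Z'_{u_1}$.
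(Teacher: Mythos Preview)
Your overall architecture is sound and largely parallels the paper: the implications $(3)\Rightarrow(1)\Rightarrow(2)$, the normalizer verification for $Z_{u_i}$, and the uniqueness argument are all correct and essentially match the paper's treatment (the paper checks the normalizer property for $u'$ before splitting, you check it for $Z_{u_i}$ after splitting, but the Jacobi computations are the same).

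The one substantive difference, and the one place where your argument is incomplete, is the heart of $(2)\Rightarrow(3)$. You produce the splitting by choosing \emph{two scalings} $t_1\neq t_2$ and solving a $2\times 2$ linear system; this requires the claim that $p(t)/q(t)=t\,b_t/a_t$ is non-constant on the open set where $\varphi'(\theta_t)\neq 0$. You assert this but do not prove it, and it is not obvious: for a fixed pair $(u_1,u_2)$ the ratio $t\,b_t/a_t$ could in principle satisfy a first-order ODE in $\theta$ that some admissible $\varphi$ might solve locally. The claim is plausible and likely salvageable, but as written it is a gap.

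The paper avoids this issue entirely with a cleaner trick: instead of two different scalings, it applies Lemma~\ref{lemma-7} to $(\lambda u_1,u_2)$ and to $(\lambda u_1,-u_2)$ at a \emph{single} $\lambda$ with $\varphi'(\theta)\neq 0$. Because the norm is reversible, both give the same $\theta$ and hence the same coefficients $a,b$, while $[u_1,u_2]$ changes sign. Adding the two equations yields $a[u'+u'',u_1]+b[u'-u'',u_2]=0$, and since the summands lie in $\mathfrak{m}_1$ and $\mathfrak{m}_2$ respectively, one reads off $u'+u''\in C_\mathfrak{h}(u_1)$ and $u'-u''\in C_\mathfrak{h}(u_2)$ directly. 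This produces $Z_{u_1},Z_{u_2}$ without any non-degeneracy hypothesis. You can patch your proof by replacing the two-parameter argument with this sign-flip; everything else you wrote then goes through.
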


\begin{proof}First, we prove the statement from (2) to (3). Choose any $u_1\in\mathfrak{m}_1$ and
$u_2\in\mathfrak{m}_2$. If one of them vanishes, $Z_{u_1}=0$ and $Z_{u_2}=0$ satisfy the requirement in (3). So we may further assume $u_1\neq0$ and $u_2\neq0$. Since $F$ is not naturally reductive, i.e., $\varphi$ is not constant on $(0,1)$, we can find $\lambda>0$, such that $\theta=\alpha_2(u_2)/\alpha(\lambda u_1+u_2)\in(0,1)$
satisfies $\varphi'(\theta)\neq0$. Applying Lemma \ref{lemma-7} to $\lambda u_1$ and $u_2$,we get
$u'\in\mathfrak{h}$ satisfying
$[u_1,u_2]=a[u',u_1]+b[u',u_2]$, in which
\begin{equation}\label{006}
a=\tfrac{\theta^2\varphi'(\theta)-\theta\varphi(\theta)}{\varphi'(\theta)}\quad\mbox{and}\quad
b=\tfrac{(\theta^2-1)\varphi'(\theta)-\theta\varphi(\theta)}{\lambda\varphi'(\theta)}.
\end{equation}
By Remark \ref{remark-1}, both $a$ and $b$ are nonzero.

\noindent{\bf Claim A}: we can choose $u'$ from $\widetilde{C}_\mathfrak{h}(u_1+u_2)$.

To prove Claim A, we first verify $u'\in N_{\mathfrak{h}}( C_\mathfrak{h}(u_1+u_2))$.
For any $v\in C_\mathfrak{h}(u_1+u_2)=C_\mathfrak{h}(u_1)\cap C_\mathfrak{h}(u_2)$, we have
\begin{eqnarray*}
& &a[[v,u'],u_1]+b[[v,u'],u_2]=[v,a[u',u_1]+b[u',u_2]]\\
&=&[v,[u_1,u_2]]=[[v, u_1],u_2]+[u_1,[v,u_2]]=0,
\end{eqnarray*}
in which the two summands in the left side belong to $\mathfrak{m}_1$ and $\mathfrak{m}_2$ respectively. So we get $[[v,u'],u_1]=[[v,u'],u_2]=0$, i.e., $[v,u']\in C_\mathfrak{h}(u_1)\cap C_\mathfrak{h}(u_2)=C_\mathfrak{h}(u_1+u_2)$. So $u'\in N_{\mathfrak{h}}(C_\mathfrak{h}(u_1+u_2))$.

Notice that the vector $u'$ provided by Lemma \ref{lemma-7} for $\lambda u_1$ and $u_2$ is not unique. It can be replaced by any vector in $u'+C_\mathfrak{h}(u_1+u_2)$. In particular, we can choose it from the $Q$-orthogonal complement $\widetilde{C}_\mathfrak{h}(u_1+u_2)$ of
$C_\mathfrak{h}(u_1+u_2)$ in $N_\mathfrak{h}(u_1+u_2)$, which proves Claim A.

We can also apply Lemma \ref{lemma-7} to $\lambda u_1$ and $-u_2$, and then get $u''\in \widetilde{C}_\mathfrak{h}(u_1-u_2)=\widetilde{C}_\mathfrak{h}(u_1+u_2)$, such that
$-[u_1,u_2]=a[u'',u_1]-b[u'',u_2]$. Notice that $a$ and $b$ here are also given by (\ref{006}),
because $\alpha_2(u_2)=\alpha_2(-u)$ and $\alpha(\lambda u_1+u_2)=\alpha(\lambda u_1-u_2)$.

Adding the two equalities containing $u'$ and $u''$ respectively, we get $a[u'+u'',u_1]+b[u'-u'',u_2]=0$, in which the two
summand in the left side belong to $\mathfrak{m}_1$ and $\mathfrak{m}_2$ respectively. So $u'+u''\in C_\mathfrak{h}(u_1)$ and $u'-u''\in C_\mathfrak{h}(u_2)$. Define $Z_{u_1}=\tfrac{b}{2}(u'+u'')\in\widetilde{C}_\mathfrak{h}(u_1+u_2)\cap C_\mathfrak{h}(u_1)$ and $Z_{u_2}=\tfrac{a}{2}(u'-u'')\in\widetilde{C}_\mathfrak{h}(u_1+u_2)\cap C_\mathfrak{h}(u_2)$.
Then $u'=\tfrac{1}{b}Z_{u_1}+\tfrac{1}{a}Z_{u_2}$ and
\begin{eqnarray*}
[u_1,u_2]&=&a[u',u_1]+b[u',u_2]\\
&=&[\tfrac{a}{b}Z_{u_1},u_1]+[Z_{u_2},u_1]
+[Z_{u_1},u_2]+[\tfrac{b}{a}Z_{u_2},u_2]\\
&=&[Z_{u_2},u_1]
+[Z_{u_1},u_2].
\end{eqnarray*}
The proof of the statement from (1) to (2) is finished.

Next, we prove the statement from (3) to (1). Choose any $u_1\in\mathfrak{m}_1\backslash\{0\}$ and $u_2\in\mathfrak{m}_2\backslash\{0\}$, then we have $Z_{u_1}$ and $Z_{u_2}$ provided by (3). If $\varphi'(\theta)\neq0$ for $\theta=\tfrac{\alpha_2(u_2)}{\alpha(u)}\in(0,1)$, we choose $u'=\tfrac{1}{b}Z_{u_1}+\tfrac{1}{a}Z_{u_2}$, in which $a$ and $b$ are given in (\ref{006}). Otherwise, we choose
$u'=0$. In both cases, (\ref{equation 5.7}) is satisfies, and then Lemma \ref{lemma-7} provides the g.o. property. The proof of the statement from (3) to (1) is finished. The proof of the statement from (1)
to (2) is obvious.

Finally, we prove the uniqueness of $Z_{u_1}$ and $Z_{u_2}$ in (3).
Assume $Z'_{u_1}\in\widetilde{C}_\mathfrak{h}(u_1+u_2)\cap C_\mathfrak{h}(u_1)$ and $Z'_{u_2}\in\widetilde{C}_\mathfrak{h}(u_1+u_2)\cap C_\mathfrak{h}(u_2)$ satisfy $[u_1,u_2]=[Z'_{u_2},u_1]+[Z'_{u_1},u_2]$. Then we have $[Z_{u_2}-Z'_{u_2},u_1]+[Z_{u_1}-Z'_{u_1},u_2]=0$. By earlier argument,
$Z_{u_2}-Z'_{u_2}\in C_\mathfrak{h}(u_1)$, i.e., $Z_{u_2}-Z'_{u_2}\in C_\mathfrak{h}(u_1)\cap
C_\mathfrak{h}(u_2)=C_\mathfrak{h}(u_1+u_2)$. On the other hand,
$Z_{u_2}-Z'_{u_2}\in\widetilde{C}_\mathfrak{h}(u_1+u_2)$. So $Z_{u_2}-Z'_{u_2}=0$. For the same reason, $Z_{u_1}-Z'_{u_1}=0$, which ends the proof.
\end{proof}

\begin{proof}[Proof of Theorem \ref{theorem 1.1}]
First, we prove Theorem \ref{theorem 1.1} with the additional assumption $(\mathfrak{g},\mathfrak{h})\neq(D_4,G_2)$. Then the two summands in the isotropy action for $G/H$ are inequivalent $H$-representations. So the given decomposition $\mathfrak{m}=\mathfrak{m}_1+\mathfrak{m}_2$ is the only nontrivial $\mathrm{Ad}(H)$-invariant decomposition for $\mathfrak{m}$. In this case, any homogeneous Riemannian metric on $G/H$ is a standard homogeneous $(\alpha_1,\alpha_2)$-metric, and any standard homogeneous $(\alpha_1,\alpha_2)$-metric on $G/H$ must be with respect to $\mathfrak{m}=\mathfrak{m}_1+\mathfrak{m}_2$.

Assume that $G/H$ admits a standard homogeneous $(\alpha_1,\alpha_2)$-metric which is g.o. but not naturally reductive. Then
Lemma \ref{proposition 5.7} indicates that it satisfies (2) in Theorem \ref{thm-5}. By Theorem \ref{thm-5}, $G/H$ admits a homogeneous Riemannian metric which is g.o. but not normal. Theorem \ref{thm-4} tells us that $G/H$ belongs to the list in Theorem \ref{theorem 1.1}, in the Lie algebraic level.
Conversely, assume that $G/H$ belongs to the list (except the first one) in Theorem \ref{theorem 1.1}.
It admits a homogeneous Riemannian metric which is g.o. but not normal. Then Theorem \ref{thm-5} indicates that $G/H$ satisfies the requirement in (3) in Lemma \ref{proposition 5.7}, and Lemma \ref{proposition 5.7} tells us that any standard homogeneous $(\alpha_1,\alpha_2)$-metric $F=\alpha\varphi(\alpha_2/\alpha)$ on $G/H$ is g.o.. We may choose a non-constant function $\varphi(\theta)$ here. Then by Lemma \ref{lemma-8}, we see that this $F$ is not naturally reductive.
 The proof of Theorem \ref{theorem 1.1} when $(\mathfrak{g},\mathfrak{h})\neq(D_4,G_2)$ is finished.

 Next, we consider the case $(\mathfrak{g},\mathfrak{h})=(D_4,G_2)$. In this case, $G/H$ coincides
 with $Spin(8)/G_2$ in the Lie algebraic level, so any homogeneous Finsler metric is locally isometric to a homogeneous Finsler metric on $Spin(8)/G_2$. Because $Spin(8)/G_2$ is weakly symmetric \cite{DH},
 any homogeneous Finsler metric on $Spin(8)/G_2$ is g.o. \cite{D1}. So any homogeneous Finsler metric on $G/H$ is g.o. as well.
This ends the proof of Theorem \ref{theorem 1.1}.
\end{proof}

\section{Standard homogeneous $(\alpha_1,\alpha_2,\alpha_3)$-metrics on generalized Wallach spaces  }

\subsection{Classification of generalized Wallach spaces}
Throughout this section, we assume that $G/H$ is a simply connected
homogeneous manifold, such that the compact connected semi simple
$G$ acts effectively on $G/H$, and we have an $Q$-orthogonal decomposition
$\mathfrak{m}=\mathfrak{m}_1+\mathfrak{m}_2+\mathfrak{m}_3$
(we call it the {\it canonical decomposition}),
where each $\mathfrak{m}_i$ is an $\mathrm{Ad}(H)$-invariant nonzero space on which the $H$-action is irreducible,
and $[\mathfrak{m}_i,\mathfrak{m}_i]\subseteq \mathfrak{h}$ for each $i$.
This $G/H$ is called a {\it generalized Wallach spaces}. Theorem 1 in \cite{Niko1} classifies generalized Wallach spaces, which consist of the following three subclasses:
\begin{enumerate}
  \item[{\bf Type I}:] if $[\mathfrak{m}_i,\mathfrak{m}_j]=0$ when $\{i,j,k\}=\{1,2,3\}$,
  $G/H$ is the product of three irreducible symmetric spaces of compact type;
  \item[{\bf Type II}:] if $[\mathfrak{m}_i,\mathfrak{m}_j]=\mathfrak{m}_k$ when $\{i,j,k\}=\{1,2,3\}$ and $G$ is simple,  the pair $(\mathfrak{g}, \mathfrak{h})$ is one of the pairs in Table \ref{Table B};
  \item[{\bf Type III}:] if $[\mathfrak{m}_i,\mathfrak{m}_j]=\mathfrak{m}_k$ when $\{i,j,k\}=\{1,2,3\}$ and $G$ is not simple, $G=K\times K\times K$ and $H=\mathrm{diag}(K)\subseteq G$, where $K$ is a connected simply connected compact simple Lie group, $\mathfrak{m}_1=\{(X,X,-X,-X),\forall X\in\mathfrak{k}\}$,
      $\mathfrak{m}_2=\{(X,-X,X,-X),\forall X\in\mathfrak{k}\}$,
      $\mathfrak{m}_3=\{(X,-X,-X,X),\forall X\in\mathfrak{k}\}$ and
      $\mathfrak{k}=\mathrm{Lie}(K)$. This $G/H$ is called a {\it Ledger-Obata space} in some literatures.
\end{enumerate}
\begin{table}[htbp]
\newcommand{\tabincell}[2]{\begin{tabular}{@{}#1@{}}#2\end{tabular}}
\centering
\caption{$(\mathfrak{g},\mathfrak{h})$ for the generalized Wallach space $G/H$
with a simple $G$}\label{Table B}
\begin{tabular}{|c|c|c|c|}
  \hline
   $\mathfrak{g}$         & $\mathfrak{h}$    &$\mathfrak{g}$           &$\mathfrak{h}$          \\
  \hline
  $so(k+l+m)$ & $so(k)\oplus so(l)\oplus so(m)$ &$e_7$ & $so(8)\oplus3sp(1)$  \\
  \hline
  $su(k+l+m)$ & $s(u(k)\oplus u(l)\oplus u(m))$ &$e_7$ &  $su(6)\oplus sp(1)\oplus R$ \\
  \hline
  $sp(k+l+m)$ & $Sp(k)\oplus sp(l)\oplus sp(m)$ &$e_7$ & $so(8)$    \\
  \hline
  $su(2l), l\geq 2$ & $u(l)$   &$e_8$  & $so(12)\oplus 2sp(1)$     \\
  \hline
  $so(2l), l\geq 4$ & $u(1)\oplus u(l-1)$ &$e_8$  &$so(8)\oplus so(8)$   \\
  \hline
  $e_6$ & $su(4)\oplus 2sp(1)\oplus R$   &$f_4$  &$so(5)\oplus 2sp(1)$  \\
  \hline
  $e_6$ & $so(8)\oplus R^2$  &$f_4$  &$so(8)$  \\
  \hline
  $e_6$ & $sp(3)\oplus sp(1)$  &    &   \\
  \hline
\end{tabular}
\end{table}

Let $F=\sqrt{L(\alpha_1^2,\alpha_2^2,\alpha_3^3)}$ be a standard homogeneous $(\alpha_1,\alpha_2,\alpha_3)$-metric on $G/H$ with respect to the canonical
decomposition.
We will discuss the g.o. property of $(G/H,F)$.
Type I is easy, because when $G/H$ is of Type I, we have $[\mathfrak{m},\mathfrak{m}]\subset\mathfrak{h}$, and then any homogeneous Finsler metric $F$ on $G/H$ is naturally reductive.

To discuss Type II and Type III, we need the following criterion.

\begin{lemma}\label{lemma-9}
Keeping all assumptions and notations in this section, then the homogeneous metric $F=\sqrt{L(\alpha_1^2,\alpha_2^2,\alpha_3^2)}$ is g.o. on $G/H$ if and only if for any nonzero vector $u=u_1+u_2+u_3$ with $u_i\in \mathfrak{m}_i$ for each $i$, there exists $u'\in \mathfrak{h}$ satisfying
\begin{equation}\label{equation 6.9}
\begin{cases}
\frac{\partial L}{\partial \theta_1}[u',u_1]+
(\frac{\partial L}{\partial \theta_3}-\frac{\partial L}{\partial \theta_2})[u_2,u_3]=0, \\
\\
\frac{\partial L}{\partial \theta_2}[u',u_2]+
(\frac{\partial L}{\partial \theta_3}-\frac{\partial L}{\partial \theta_1})[u_1,u_3]=0, \\
\\
\frac{\partial L}{\partial \theta_3}[u',u_3]+
(\frac{\partial L}{\partial \theta_2}-\frac{\partial L}{\partial \theta_1})[u_1,u_2]=0.
\end{cases}
\end{equation}
\end{lemma}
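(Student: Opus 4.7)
The plan is to mimic the argument for Lemma \ref{proposition 5.2} but with one additional isotropy summand. By Lemma \ref{lemma3.4}, the g.o. property is equivalent to: for every nonzero $u \in \mathfrak{m}$, there exists $u' \in \mathfrak{h}$ with $[u'+u, A_u(u)] = 0$. So everything reduces to computing that bracket explicitly and reading off its decomposition with respect to $\mathfrak{m} = \mathfrak{m}_1 + \mathfrak{m}_2 + \mathfrak{m}_3$.

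Writing $u = u_1 + u_2 + u_3$ with $u_i \in \mathfrak{m}_i$, Lemma \ref{Proposition 3.8} gives
\begin{equation*}
A_u(u) = \sum_{i=1}^{3} \frac{\partial L}{\partial \theta_i}\, u_i.
\end{equation*}
Using bilinearity of the bracket together with $[u_i,u_i]=0$, a routine expansion yields
\begin{equation*}
[u,A_u(u)] = \Bigl(\tfrac{\partial L}{\partial \theta_2}-\tfrac{\partial L}{\partial \theta_1}\Bigr)[u_1,u_2] + \Bigl(\tfrac{\partial L}{\partial \theta_3}-\tfrac{\partial L}{\partial \theta_1}\Bigr)[u_1,u_3] + \Bigl(\tfrac{\partial L}{\partial \theta_3}-\tfrac{\partial L}{\partial \theta_2}\Bigr)[u_2,u_3],
\end{equation*}
and $[u',A_u(u)] = \sum_i \frac{\partial L}{\partial \theta_i}[u',u_i]$.

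The decisive ingredient is the structural property of generalized Wallach spaces: the condition $[\mathfrak{m}_i,\mathfrak{m}_i]\subseteq \mathfrak{h}$ for each $i$, combined with the Jacobi identity and the irreducibility of each summand (or alternatively, direct inspection of the Type I, II, III classification recalled above), forces $[\mathfrak{m}_i,\mathfrak{m}_j]\subseteq \mathfrak{m}_k$ whenever $\{i,j,k\}=\{1,2,3\}$. Since reductivity also gives $[u',u_i]\in \mathfrak{m}_i$, the element $[u'+u, A_u(u)]$ lies entirely in $\mathfrak{m}$, and its $\mathfrak{m}_1$, $\mathfrak{m}_2$, $\mathfrak{m}_3$ components are precisely the three left-hand sides of (\ref{equation 6.9}). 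Hence $[u'+u,A_u(u)]=0$ is equivalent to the three equations in (\ref{equation 6.9}) simultaneously.

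The only remaining subtlety is the case where some $u_i$ vanish, but then the bracket terms involving the missing summands disappear and (\ref{equation 6.9}) is trivially satisfied with $u'=0$, exactly as in the two-summand proof of Lemma \ref{proposition 5.2}. There is no genuine obstacle here beyond bookkeeping of signs in the expansion; the argument is a direct generalization of the two-summand case, with the Wallach condition $[\mathfrak{m}_i,\mathfrak{m}_j]\subseteq \mathfrak{m}_k$ playing the role that made the $\mathfrak{m}_1$/$\mathfrak{m}_2$ separation automatic in Lemma \ref{proposition 5.2}.
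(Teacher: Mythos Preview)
Your approach is essentially identical to the paper's: invoke Lemma \ref{lemma3.4}, compute $A_u(u)$ via Lemma \ref{Proposition 3.8}, expand $[u'+u,A_u(u)]$, and separate into $\mathfrak{m}_1,\mathfrak{m}_2,\mathfrak{m}_3$ components using $[\mathfrak{m}_i,\mathfrak{m}_j]\subseteq\mathfrak{m}_k$ and $[\mathfrak{h},\mathfrak{m}_i]\subseteq\mathfrak{m}_i$.

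One correction, though: your closing remark about the edge case is both unnecessary and false. It is unnecessary because the pointwise equivalence $[u'+u,A_u(u)]=0 \Longleftrightarrow \text{(\ref{equation 6.9})}$ that you establish holds for \emph{every} nonzero $u$, including those with some $u_i=0$; the lemma as stated quantifies over all nonzero $u$, so no separate case is required (the analogous step in Lemma \ref{proposition 5.2} was needed only because that statement was phrased for $u_1,u_2$ both nonzero). And it is false: if, say, $u_3=0$ but $u_1,u_2\neq 0$, then with $u'=0$ the third equation of (\ref{equation 6.9}) becomes $(\tfrac{\partial L}{\partial\theta_2}-\tfrac{\partial L}{\partial\theta_1})[u_1,u_2]=0$, which is not in general satisfied since $[u_1,u_2]\in\mathfrak{m}_3$ need not vanish. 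Simply delete that paragraph; the main argument already covers all cases.
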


\begin{proof} Assume that $F$ is g.o. on $G/H$. Choose any nonzero vector $u=u_1+u_2+u_3$ with $u_i\in\mathfrak{m}_i$ for each $i$. By Lemma \ref{Proposition 3.8}, $A_u(u)=\sum_{i=1}^3\tfrac{\partial L}{\partial\theta_i}u_i$. Lemma \ref{lemma3.4} provides $u'\in\mathfrak{h}$, which satisfies $[u+u',A_u(u)]=0$, i.e.,
\begin{eqnarray}
& &(\tfrac{\partial L}{\partial \theta_1}[u',u_1]+(\tfrac{\partial L}{\partial\theta_3}-
\tfrac{\partial L}{\partial\theta_2})[u_2,u_3])
+(\frac{\partial L}{\partial \theta_2}[u',u_2]+
(\frac{\partial L}{\partial \theta_3}-\frac{\partial L}{\partial \theta_1})[u_1,u_3])
\nonumber\\
& &+(\frac{\partial L}{\partial \theta_3}[u',u_3]+
(\frac{\partial L}{\partial \theta_2}-\frac{\partial L}{\partial \theta_1})[u_1,u_2])=0.\label{007}
\end{eqnarray}
Notice that the three summands in (\ref{007}) belong to the three distinct $\mathfrak{m}_i$ respectively, we get (\ref{equation 6.9}) in one direction. Reversing above discussion, the other direction can also be proved.
\end{proof}

\subsection{Discussion for Type II and Type III}

When $G/H$ is of Type II or Type III,
$[\mathfrak{m}_i,\mathfrak{m}_j]=\mathfrak{m}_k$ for $\{i,j,k\}=\{1,2,3\}$. We can
find $v_i\in\mathfrak{m}_i\backslash\{0\}$ for each $i$, such that $Q([v_1,v_2],v_3)\neq0$. By the bi-invariant
property, $Q([v_2,v_3],v_1)$ and $Q([v_3,v_1],v_2)$ are also nonzero.
For any $\theta_1,\theta_2,\theta_3>0$, we can find $\lambda_1,\lambda_2,\lambda_3>0$,
such that $u_i=\lambda_i v_i$ satisfies $\alpha_i^2(u_i)=\theta_i$ for each $i$.
By Lemma \ref{lemma-9}, there exists $u'\in\mathfrak{h}$, such that
$\tfrac{\partial L}{\partial \theta_1}[u',u_1]+(\tfrac{\partial L}{\partial \theta_3}
-\tfrac{\partial L}{\partial\theta_2})[u_2,u_3]=0$, and then
\begin{eqnarray*}
(\tfrac{\partial L}{\partial \theta_3}
-\tfrac{\partial L}{\partial\theta_2})\lambda_1\lambda_2\lambda_3 Q(v_1,[v_2,v_3])=
(\tfrac{\partial L}{\partial \theta_3}
-\tfrac{\partial L}{\partial\theta_2}) Q(u_1,[u_2,u_3])=-\tfrac{\partial L}{\partial\theta_1}Q(u_1,[u',u_1])=0.
\end{eqnarray*}
So $\tfrac{\partial L}{\partial \theta_2}=\tfrac{\partial L}{\partial\theta_3}$ at $(\theta_1,\theta_2,\theta_3)$. For the same reason,
we also have $\tfrac{\partial L}{\partial \theta_1}=\tfrac{\partial L}{\partial\theta_2}$ at $(\theta_1,\theta_2,\theta_3)$.

Now, let
\begin{equation*}
  \Omega=\{(\theta_1,\theta_2,\theta_3)|\theta_i\geq 0,i=1,2,3 \} \backslash \{(0,0,0)\},
\end{equation*}
and the interior of $\Omega$ is denoted by
\begin{equation*}
  \Omega^0=\{(\theta_1,\theta_2,\theta_3)|\theta_i> 0, i=1,2,3 \}.
\end{equation*}
To summarize, we have
\begin{equation*}
  \begin{cases}
\frac{\partial L}{\partial \theta_1}=
\frac{\partial L}{\partial \theta_2}=\frac{\partial L}{\partial \theta_3},\  \forall (\theta_1,\theta_2,\theta_3)\in \Omega^0; \\
\\
L(t\theta_1,t\theta_2,t\theta_3)=tL(\theta_1,\theta_2,\theta_3).
\end{cases}
\end{equation*}

the level sets of $L$ are the planes
$\theta_1+\theta_2+\theta_3=\mathrm{const}$. Together with the positive 1-homogeneity of $L$,
we see that $L(\theta_1,\theta_2,\theta_3)$ is a linear function on $\Omega^0$. By the smoothness of $L$ on $\Omega$, we have $L$ is a linear function on $\Omega$, that is
$L(\theta_1,\theta_2,\theta_3)=\lambda(\theta_1+\theta_2+\theta_3)$ on $\Omega$ for some $\lambda>0$. The proof of Theorem \ref{theorem 1.2} is finished.

As a direct corollary of Theorem \ref{theorem 1.2}, we have
\begin{corollary}\label{cor-1}
Any standard homogeneous $(\alpha_1,\alpha_2,\alpha_3)$-metric is g.o. on a Wallach space $SU(3)/T^2$, $Sp(3)/Sp(1)^3$ or $F_4/Spin(8)$ if and only if it is a normal homogeneous Riemannian metric.
\end{corollary}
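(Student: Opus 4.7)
The plan is to apply Theorem \ref{theorem 1.2} directly to each of the three Wallach spaces, supplemented by the classical fact that normal homogeneous Finsler metrics are g.o.

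First, I would verify that each of $SU(3)/T^2$, $Sp(3)/Sp(1)^3$, and $F_4/Spin(8)$ fits the hypotheses of Theorem \ref{theorem 1.2}: each is a generalized Wallach space of Type II in the classification recalled at the start of Section 5, since its ambient group is simple and the isotropy representation splits into three pairwise inequivalent irreducible summands $\mathfrak{m}_1,\mathfrak{m}_2,\mathfrak{m}_3$ satisfying $[\mathfrak{m}_i,\mathfrak{m}_i]\subset\mathfrak{h}$ and $[\mathfrak{m}_i,\mathfrak{m}_j]=\mathfrak{m}_k$ for $\{i,j,k\}=\{1,2,3\}$. These summands are, respectively, the three root spaces of $\mathfrak{su}(3)$ relative to the maximal torus, the three off-diagonal $\mathbb{H}$-blocks in $\mathfrak{sp}(3)$, and the three $8$-dimensional triality summands in $\mathfrak{f}_4/\mathfrak{so}(8)$. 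The nontrivial cross-bracket relation immediately rules out being a product of three symmetric spaces, which would correspond to Type I, where every cross-bracket vanishes.

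Next I would note that because the three summands are pairwise inequivalent as $\mathrm{Ad}(H)$-modules, the canonical decomposition is the unique $\mathrm{Ad}(H)$-invariant $Q$-orthogonal splitting of $\mathfrak{m}$ into three irreducible summands. Consequently, every standard homogeneous $(\alpha_1,\alpha_2,\alpha_3)$-metric on $G/H$ is automatically standard with respect to the canonical decomposition, so Theorem \ref{theorem 1.2} applies to the entire class of such metrics. The forward implication of Corollary \ref{cor-1} is then immediate: any g.o.\ metric $F$ of the prescribed form must be a normal homogeneous Riemannian metric. The converse follows from the classical result of \cite{XD}, already cited in the introduction, that every normal homogeneous Finsler metric is g.o.

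The only mild obstacle is the bookkeeping of checking the Type II status and the pairwise inequivalence of the three isotropy summands for each of the three sporadic Wallach spaces; this is standard from their explicit root-space or triality descriptions. No additional analysis of the metric operator or of the system (\ref{equation 6.9}) is required beyond what is already carried out in the proof of Theorem \ref{theorem 1.2}.
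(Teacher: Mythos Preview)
Your proposal is correct and follows essentially the same route as the paper, which simply states that Corollary \ref{cor-1} is a direct consequence of Theorem \ref{theorem 1.2}. Your explicit check that the three Wallach spaces are of Type II (hence not products of three symmetric spaces), together with the pairwise inequivalence of the isotropy summands forcing the canonical decomposition, and the converse via \cite{XD}, are harmless elaborations of details the paper leaves implicit.
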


\begin{remark}For most generalized Wallach space $G/H$ of Type II,
the three summands in its isotropy representation are pairwise non-equivalent $\mathrm{(}$see Theorem 3.18
in \cite{CKL2016}$\mathrm{)}$, and then any homogeneous $(\alpha_1,\alpha_2,\alpha_3)$-metric on $G/H$ must be a standard homogeneous $(\alpha_1,\alpha_2,\alpha_3)$-metric with respect to the canonical decomposition. On the other hand, for a generalized Wallach space $G/H$ of Type III, the $\mathrm{Ad}(H)$-action on each $\mathfrak{m}_i$ is equivalent to the adjoint representation, so there exists many more homogeneous $(\alpha_1,\alpha_2,\alpha_3)$-metrics.
\end{remark}

{\bf Acknowledgement}.
This paper is supported by Natural Science Foundation of Shandong Province (No. ZR2020QA001),
Beijing Natural Science Foundation (No. 1222003), and National Natural Science Foundation of China (No. 12131012, No. 11821101).
\bigskip


\begin{thebibliography}{99}

\bibitem{AW} A.Arvanitoyeorgos, Y.Wang, Homogeneous geodesic in generalized Wallach spaces,
The Belgian Mathematical Society 24 (2) (2017), 257-270.

\bibitem{BC}  D. Bao, S.S. Chern, Z.Shen, An Introduction to Riemann-Finsler Geometry. Springer, New
York (2000).

\bibitem{BN2020} V.N. Berestovskii and Yu.G. Nikonorov, Riemannian Manifolds and Homogeneous Geodesics, Springer Monogr. Math., Springer, Cham, 2020.

\bibitem{DA} D.V. Alekseevsky , A. Arvanitoyeorgos, Riemannian flag manifolds with homogeneous geodesics, Trans. Amer. Math.Soc. 359(8), 3769-3789 (2007).

\bibitem{DN}   D.V. Alekseevsky, Yu.G. Nikonorov, Compact Riemannian manifolds with
homogeneous geodesics, SIGMA Symmetry Integrability Geom. Methods Appl., 5, 093, 16 pages,(2009).


\bibitem{Niko1}  Yu.G. Nikonorov, Classification of generalized Wallach spaces, Geom. Dedicata, 181(1), (2016).

\bibitem{CKL2016} Z. Chen, Y. Kang and K. Liang, Invariant Einstein metrics on three-locally-symmetric spaces, Comm. Anal. Geom. 24 (4) (2016), 769-792.

\bibitem{CN} Z. Chen, Yu.G. Nikonorov, Geodesic orbit Riemannian spaces with two isotropy summands. I. Geom Dedicata. 203, 163-178, (2019).

\bibitem{D1} S. Deng, Homogeneous Finsler spaces, Springer Monographs in Mathematics, Springer, New York, 2012.

\bibitem{DH} S. Deng, Z. Hou, Weakly symmetric Finsler spaces, Commun. Contemp. Math., 12(2) , 309-323, (2010).

\bibitem{DH1} S. Deng, Z. Hou, Invariant Finsler metrics on homogeneous manifolds, J. Phys. A 37, 8245-8253, (2004).

\bibitem{DH2} S. Deng, Z. Hou, Naturally reductive homogeneous Finsler spaces, manuscripta math. 131, 215-229 (2010).

\bibitem{DK} W. Dickinson and M. Kerr, The geometry of compact homogeneous spaces with two isotropy summands, Ann. Glob. Anal. Geom. $\mathbf{34}$ , 329-350,(2008)  .

\bibitem{DX2016} S. Deng and M. Xu, $(\alpha_1,\alpha_2)$-metrics and Clifford-Wolf homogeneity, J. Geom. Anal. 26 (3) (2016), 2282-2321.


\bibitem{Gordon}  C.S. Gordon, Homogeneous Riemannian manifolds whose geodesies are orbits,155-174, Topics in Geometry: In Memory of Joseph D'Atri (Ed. S. Gindikin), Progress in Nonlinear Differential Equations 20, Birkhauser-Verlag, Boston, Basel, Berlin, (1996).

\bibitem{GN}  C.S. Gordon , Yu.G. Nikonorov, Geodesic Orbit Riemannian Structures on $\mathbf{R}^n$. J. Geom. Phy. 134, 235-243, (2018).


\bibitem{Niko} Yu.G. Nikonorov, Killing vector fields of constant length on compact homogeneous Riemannian
manifolds, Ann. Glob. Anal. Geom., 48(4), 305-330, (2015).


\bibitem{H1} L. Huang, On the fundamental equations of homogeneous Finsler spaces, Differential Geom. Appl., 40, 187-208.


\bibitem{L}  D. Latifi, Homogeneous geodesics in homogeneous Finsler spaces, J. Geom. Phys. 57, 1421-1433 (2007).

\bibitem{S}   N. P. Souris, Geodesic orbit metrics in compact homogeneous manifolds with equivalent isotropy
submodules, Transform. Groups. 23(4),  1149-1165, (2018).


\bibitem{KV}    O. Kowalski ,L. Vanhecke, Riemannian manifold with homogeneous geodesics, Boll. Unione Math. Ital. B(7) 5,189-246 (1991).



\bibitem{TX2023} J. Tan and M. Xu, Naturally reductive $(\alpha_1,\alpha_2)$ metrics,
Acta Math. Sci., 43B(4), 1547-1560, (2023).

\bibitem{X}    M Xu,  Geodesic orbit spheres and constant curvature in Finsler geometry, Diff. Geom. Appl., 61(2018), 197-206.

\bibitem{XD}   M. Xu , S. Deng, Normal homogeneous Finsler spaces, Transform. Groups, 22(4), 1143-1183, (2017).


\bibitem{XZ}  M. Xu , L. Zhang, $\delta$-homogeneity in Finsler geometry and the positive curvature problem, Osaka J. Math., 55 (1) , 177-194, (2018).

\bibitem{ZX} L. Zhang, M. Xu, Standard homogeneous $(\alpha_1,\alpha_2)$-metrics and geodesic orbit proberty, Math. Nachr 295(7), 1443-1453, (2022).



\bibitem{YD}   Z, Yan , S. Deng ,  Finsler spaces whose geodesics are orbits,  Differ. Geom. Appl.,  36, 1-23, (2014).

\bibitem{ZYD2023} S. Zhang, Z. Yan and S. Deng, Naturally reductive homogeneous $(\alpha,\beta)$ spaces, Publ. Math. Debrecen, 102(3-4), 415-427 (2023).

\end{thebibliography}
\end{document}